\newtheorem{theo}{Theorem}
\newtheorem{prop}[theo]{Proposition}
\newtheorem{lemma}[theo]{Lemma}
\newcommand {\pare}[1] {\left( {#1} \right)}
\newcommand {\cro}[1] {\left[ {#1} \right]}
\newcommand {\acc}[1] {\left\{ {#1} \right\}}
\newcommand {\nor}[1] { \left\| {#1} \right\|}
\newcommand {\bra}[1] { \langle {#1} \rangle}
\newcommand {\ent}[1] { \lfloor {#1} \rfloor}
\def \E {\mathbb{E}}
\def \P {\mathbb{P}}
\def \R  {\mathbb{R}} 
\def \T  {\mathbb{T}} 
\def \Z  {\mathbb{Z}}
\def \C  {\mathbb{C}}
\def \FF {{\mathcal F}}
\def \EE {{\mathcal E}}
\def \II {{\mathcal I}}
\def \MM {{\mathcal M}}
\def \ind {\hbox{ 1\hskip -3pt I}}
\newcommand {\va}[1] {\left| {#1} \right|}
\newcommand {\floor}[1] {\lfloor {#1}\rfloor}
\newtheorem{remarque}[theo]{Remark}
\def\F {\mathcal{F}}
\newcommand {\abs}[1] {\left\lvert {#1} \right\rvert}
\newcommand {\refeq}[1] {(\ref{#1})}
\begin{document}

\title[Large Deviations for stable random walks self intersection local times.]
{Exponential moments of self-intersection local times of stable random walks in subcritical dimensions}
\author{Fabienne Castell} 
\address{LATP, UMR CNRS 7353. Centre de Math\'ematiques et Informatique.
 Aix-Marseille Universit\'e. 39, rue Joliot Curie. 13 453 Marseille Cedex
13. France.}
\email{Fabienne.Castell@cmi.univ-mrs.fr}

\author{Cl\'ement Laurent} 
\address{LATP, UMR CNRS 7353. Centre de Math\'ematiques et Informatique.
 Aix-Marseille Universit\'e. 39, rue Joliot Curie. 13 453 Marseille Cedex
13. France.}
\email{Clement.Laurent@cmi.univ-mrs.fr}

\author{Clothilde M\'elot} 
\address{LATP, UMR CNRS 7353. Centre de Math\'ematiques et Informatique.
 Aix-Marseille Universit\'e. 39, rue Joliot Curie. 13 453 Marseille Cedex
13. France.}
\email{Clothilde.Melot@cmi.univ-mrs.fr}

\subjclass[2000]{60F10; 60J55}
\keywords{Self-intersection local times; Random walk in random scenery; large deviations; stable random walk\\
This research was supported by the french ANR project MEMEMO2} 

\begin{abstract}
Let $(X_t, t \ge 0)$ be an $\alpha$-stable random walk with values in $\Z^d$. Let 
$l_t(x) = \int_0^t \delta_x(X_s) ds$ be its local time.  For $p>1$, not necessarily integer, 
$I_t = \sum_x l_t^p(x)$ is the so-called $p$-fold self- intersection local time of the random walk.  
When  $p(d -\alpha) < d$,  we derive precise logarithmic asymptotics of the probability  $\P\cro{I_t \ge r_t}$ for 
all scales $r_t  \gg \E\cro{I_t}$. Our result extends previous works by Chen, Li \& Rosen 2005, Becker \& K\"onig 2010, and Laurent 2012.
\end{abstract}

\maketitle

\section{Introduction}
 Intersections of random paths have been an extensively studied topic,  not only due to its fundamental importance
  in the theory of stochastic processes, but also because they play a central role in various physical models  as the "polaron problem" in quantum field theory \cite{DV}, the parabolic Anderson model describing diffusion in random potential (\cite{KMSS,ZMRS,CM}), or 
  random polymer models (\cite{DJ,Ed,PdG}). Therefore, various mathematical objects have been introduced to quantify 
  these intersections: the range of a random walk, the volume of the Wiener sausage, or the self-intersection local times, which is the 
  main object of the present paper. 
 
 \subsection{Self-intersection local times.}
 Let $(X_t, t \ge 0)$ be a continous time random walk on $\Z^d$, with jump rate 1, and generator
 $A$: 
 \[  Af(x) = \sum_{y \in \Z^d} \mu(y-x)(f(y)- f(x)), 
 \] 
where $\mu$ denotes the  law of the increments of the random walk. Throughout the paper, we assume 
 that $\mu$  is in the domain of attraction of a stable law of index $\alpha \in (0;2]$ and that $\mu$  is symmetric. 
 More precise assumptions on $\mu$ will be given later. 
For any $x \in \Z^d$, let $l_t(x) = \int_0^t \delta_{x}(X_s) \, ds$ be the time spent by the random walk on site
$x$ up to time $t$. For a positive real number $p \ge 1$, consider the $\ell_p$ norm of $l_t$ 
\[ N_p(l_t) = \pare{\sum_{x \in \Z^d} l_t(x)^p}^{1/p} \, 
\]
Note that for $p=1$, $N_1(l_t)=t$, while for $p=0$, it is equal to the number of distinct sites visited by the random walk up to time 
$t$. Moreover, since $l_t(x) \le t$ for all $x \in \Z^d$, $N_p(l_t) \le t$. If 
$p$ is an integer,
\[  I_t:=N_p^p(l_t)= \int_0^t dt_1 \cdots \int_0^t dt_p \ind_{X_{t_1}=\cdots=X_{t_p}} \, ,
\]
is the so called $p$-fold intersection local time of the random walk, which measures the time spent 
by the random walk on sites visited at least $p$ times.  
  To begin our study of the large time behavior of $N_p(l_t)$, let us have a look at its typical behavior, which depends on the
  recurrence/transience property of the random walk. We state the results in terms of $I_t$: 
 \begin{itemize}
 \item {\bf Transient case ($d > \alpha$)}: in this case, the range of the random walk is of order $t$ and the random walk
 spends a time of order 1 on each visited site, so that $I_t$ is of order $t$.  In the introduction of \cite{KS}, Kesten \& Spitzer 
 proved that $I_t/t$ converges a.s. to some  deterministic constant. 
 \item {\bf Recurrent case ($d <\alpha$, i.e $d=1, \alpha >1$)}:   
  the range of the random walk is now of order $t^{1/\alpha}$; the random walk
 spends a time of order $t^{1-1/\alpha}$ on each visited site, so that $I_t$ is of order $t^{p-(p-1)/\alpha}$. More precisely, 
  $I_t/t^{p-(p-1)/\alpha}$ converges in distribution to the $L_p$-norm of the stable limiting process local time (see lemme 6 in \cite{KS}, 
  or lemma 14 in \cite{CLR}). 
  \item {\bf Critical case ($d=\alpha$; i.e $d=\alpha=1$, $d=\alpha=2$)}: the range of the random walk is now of order $t/\log(t)$; 
  the random walk spends a time of order $\log(t)$ on each visited site, so that $I_t$ is of order $t \log(t)^{p-1}$. More precisely, 
  $I_t/(t \log(t)^{p-1})$ converges a.s. to some explicit deterministic constant  (see \cite{cerny} or lemme 4 in \cite{CGP}). 
 \end{itemize}
 To summarize, we get 
 \begin{equation}
 \label{typical}
 \E (I_t) \asymp \begin{cases} t^{p-(p-1)/\alpha} & \mbox{ if } d < \alpha \, , \\
 							t \log(t)^{p-1} & \mbox{ if } d=\alpha \, , \\
							t & \mbox{ if } d > \alpha \, .
							\end{cases}
\end{equation}

\subsection{Main result.}
In this paper we are interested in the large deviations asymptotics of $N_p(l_t)$ when the walk  produces an excess of self-intersections.
More precisely, we give exact logarithmic asymptotics of $\P\cro{I_t \ge r_t}$ for scales $t^p \gg r_t \gg \E(I_t)$. 
 This problem has received a lot of attention during the last decade, and we refer the reader to the recent monograph \cite{chen},  or to
 the survey paper \cite{konig} for an up-to-date picture of known results and of the various technics used  in order to prove them. Let us
 stress that on a heuristic level, there is a phase transition in the optimal behavior of the random walk to produce many 
 self-intersections:
 \begin{itemize}
 \item In the supercritical case $d > \alpha q$ (where $q$ is the conjugate exponent of $p$), 
  this optimal behavior is  to stay confined in a ball of radius of order 1 during a time 
 period of order  $r_t^{1/p} \ll t$. This confinement happens with probability of order $\exp(-r_t^{1/p})$. 
 Rough logarithmic asymptotics in the supercritical case were first proved in \cite{AC} (for $p=\alpha=2$) and later
 refined in precise logarithmic asymptotics in \cite{A,ClemSPA}.
 \item   In the subcritical case $d < \alpha q$, the optimal behavior for the random walk is to remain up to time $t$ in
 a ball whose volume is of order $t^q r_t^{-q/p} \gg 1$.  This confinement happens with probability of order $\exp(- t^{1-(\alpha q)/d}
 r_t^{(\alpha q)/(pd)})$. Precise logarithmic asymptotics were first proved in $d=1$ in \cite{CL} for $\alpha =2$, and later
 extended in \cite{CLR} for $\alpha > 1$. Very recently, the case $d \ge 2, \alpha =2$ was treated in \cite{BK} (with the
 restriction $d < 2/(p-1) < 2q$), and \cite{ClemEJP}.
 \item In the critical case $d=\alpha q$, any confinement strategy in a ball of volume $R^d \in [1; t^q r_t^{-q/p}]$
 during a time period of order  $r_t^{1/p} R^{d/q}$ has a probability of order $\exp(-r_t^{1/p})$, so that the intuitive picture  
 is not clear. Nevertheless, precise logarithmic asymptotics were established in \cite{C} for $\alpha=2$, and later
 extended in \cite{ClemSPA} to other values of $\alpha$.
 \end{itemize}
 The main result of this paper is to prove logarithmic asymptotics of $\P\cro{I_t \ge r_t}$ in the subcritical case, 
 for any value of $\alpha \in (0,2]$, thus extending \cite{CLR} to $d \ge 2$ and $\alpha<1$, and \cite{BK,ClemEJP}
 to $\alpha < 2$. To  state our result,  we have to introduce 
some notations and make precise assumptions on $\mu$.
 For $p>1$, $L^p (\R^d)$ is the usual Sobolev space of $p$-integrable functions w.r.t Lebesgue measure.
 $\nor{ \, }_p$ is the norm on  $L^p (\R^d)$.  For any function $g$ in $L^2(\R^d)$, we denote by $\F(g)$ its Fourier transform:
\begin{equation}
\label{Fourier}
 \F(g)(\omega)= \int_{\R^d} g(x) \exp(-2i\pi \bra{x,\omega}) \, dx \, , \omega \in \R^d .
\end{equation}
Throughout the paper, we assume that 
\begin{description}
\item[(H1)]  $\mu$ is symmetric.
\item[(H2)]  $\mu$ is in the normal domain of attraction of a strictly (symmetric) stable law $S_{\alpha}$
with index $\alpha \in (0,2]$ with characteristic function  given by $\exp(-\va{t}^{\alpha})$. This means that there
exists a constant $c$ such that $(\frac{c}{t^{1/\alpha}} X_{st}, s \in [0;1])$ converges in distribution when $t \rightarrow +
\infty$,
in the space $D([0,1])$ of c\`adl\`ag functions, endowed with $J_1$-topology. Without loss of 
generality, we will assume that $c=2\pi$. This choice is made in order to get nice statements
 using \refeq{Fourier} as a definition of Fourier transform.
\item[(H3)] $\mu$ is non-arithmetic, i.e $\acc{\omega \in \R^d, \text{ such that } 
\va{\sum_{x \in \Z^d} \exp(2i\pi \bra{\omega, x})
\mu(x)}=1} = \Z^d$
\item[(H4)]   Let $p_t(x,y)$ be the transition probabilities of $(X_t, t \ge 0)$. There exists a constant $\kappa$ such
that for any $t > 0$,  and any $x, y \in \Z^d$, 
\begin{equation}
\label{est-pt}
 p_t(x,y) \le \kappa \pare{ \frac{1}{t^{d/\alpha}} \wedge \frac{t}{\va{x-y}^{d+\alpha}} },
\end{equation}
 where $x\wedge y=\text{min}(x,y)$. 
When $\alpha = 2$, this is true by standard Gaussian estimates. When $\alpha < 2$, assumption {\bf (H1)} and 
the following condition on $\mu$ are shown
to imply \refeq{est-pt} in \cite{BL02}: there are constants $c_1$, $c_2$, such that for any $x\in \Z^d$, $x\ne 0$,
\[  \frac{c_1}{\va{x}^{d+\alpha}} \le \mu(x) \le \frac{c_2}{\va{x}^{d +\alpha}} \, .
\]
\end{description}

We next define 
\begin{equation}
\label{rho}
\rho_{\alpha,d,p} = \pare{\frac{\alpha q}{\alpha q-d}}\pare{\frac{\alpha q-d}{d}}^{d/\alpha q} \hspace{-.5cm}
  \inf_{g:\R^d\rightarrow \R}\acc{\nor{g}_2^{2\pare{1-d/\alpha q}}   \pare{ \int_{\R^d}\vert \omega\vert^\alpha \vert\F{(g)}(\omega)\vert^2d\omega}^{d/\alpha q}  \hspace{-.5cm},\nor{g}_{2p}=1}.
\end{equation}
\begin{equation}
\label{chi}
\chi_{\alpha,d,p}:=
\inf_{g:\R^d\rightarrow \R}\acc{\int\limits_{\R^d}\vert \omega\vert^\alpha \vert\F{(g)}(\omega)\vert^2d\omega,\nor{g}_{2p}=\nor{g}_2=1} .
\end{equation}
Note that the expression in the infimum appearing in \refeq{rho} is invariant under the transformation $g \mapsto g_{\lambda}=
\lambda^{d/(2p)} g(\lambda \, \cdot \,)$, so that one can reduce this infimum to functions $g$ satisfying $\nor{g}_{2p}=
\nor{g}_2=1$. Therefore, 
\[ \rho_{\alpha,d,p} = \frac{\alpha q}{\alpha q -d} \pare{\frac{\alpha q-d}{d} \chi_{\alpha,d,p}}^{d/(\alpha q)} \, .
\]
 It is proved in  \cite{CR05} that these constants are not degenerate when $d  < \alpha q$. We have then the following result.

\begin{theo}
\label{OKCAVA3}
Assume {\bf (H1--4)}, and that $p>1$,  $\alpha \in (0;2]$ and $d<\alpha q$, where $q$ is the conjugate exponent
of $p$ ($1/p + 1/q =1$) . Let $(\beta_t, t \ge 0)$ be such that for large
$t$,  \\
-  $1\ll\beta_t^\alpha\ll t$ when $d<\alpha$,\\
-  $1\ll\beta_t^d\ll \frac{t}{\log(t)}$ when $d=\alpha$,\\
-  $1\ll \beta_t^d\ll t$ when for $d>\alpha$. \\
Then for any $\theta \ge 0$, 
\begin{equation}
\label{loglapNp}
\lim_{t\rightarrow +\infty} \frac{\beta_t^\alpha}{t} \log \E\cro{\exp\pare{\theta\beta_t^{d/q-\alpha}N_p(l_t)}}= \pare{\frac{\theta}{\rho_{\alpha,d,p}}}^\frac{\alpha q}{\alpha q-d} 
\end{equation}
and
\begin{equation}
\label{PGDSILT}
\lim\limits_{t\rightarrow +\infty} \frac{\beta_t^{\alpha}}{t} \log \P\pare{I_t\geq \theta t^p\beta_t^{-d(p-1)}}=-\theta^{\alpha/(d(p-1))}\chi_{\alpha,d,p}.
\end{equation}
\end{theo}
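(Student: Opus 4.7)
The overall strategy is to first establish the exponential moment asymptotics \refeq{loglapNp} and then deduce the large-deviation statement \refeq{PGDSILT} by a Gärtner--Ellis / Legendre transform argument.

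\textbf{From \refeq{loglapNp} to \refeq{PGDSILT}.} The map $\theta \mapsto (\theta/\rho_{\alpha,d,p})^{\alpha q/(\alpha q -d)}$ is finite, differentiable and strictly convex on $[0,\infty)$, so Gärtner--Ellis delivers an LDP for $\beta_t^{-d/q}N_p(l_t)/t$ at speed $\beta_t^\alpha/t$ with rate function the Legendre conjugate. Since $I_t=N_p(l_t)^p$, the event $\acc{I_t\ge\theta t^p\beta_t^{-d(p-1)}}$ coincides with $\acc{N_p(l_t)\ge\theta^{1/p}t\beta_t^{-d/q}}$, and a direct computation of the Legendre transform, combined with the identity $\rho_{\alpha,d,p}=\frac{\alpha q}{\alpha q-d}\pare{\frac{\alpha q-d}{d}\chi_{\alpha,d,p}}^{d/(\alpha q)}$ recorded in the text and the arithmetic relation $q(p-1)=p$, collapses the rate into $\theta^{\alpha/(d(p-1))}\chi_{\alpha,d,p}$.

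\textbf{Lower bound in \refeq{loglapNp}.} Pick a smooth, compactly supported $g\ge 0$ with $\nor{g}_{2p}=\nor{g}_2=1$ that is nearly optimal in the variational definition of $\chi_{\alpha,d,p}$, rescale by $g_\lambda(x)=\lambda^{d/(2p)}g(\lambda x)$ for a parameter $\lambda(\theta)$ to be optimized, and construct a tilted law under which the rescaled occupation density $\beta_t^d l_t(\beta_t \,\cdot\,)/t$ stays close to $g_\lambda^2/\nor{g_\lambda}_2^2$ up to time $t$. The relative entropy of this tilting is of order $(t/\beta_t^\alpha)\int|\omega|^\alpha|\F(g_\lambda)(\omega)|^2 d\omega$, while $\theta\beta_t^{d/q-\alpha}N_p(l_t)\approx \theta(t/\beta_t^\alpha)\nor{g_\lambda}_{2p}^p$ under the tilt. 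Optimizing in $g$ and $\lambda$ via the scaling invariance noted after \refeq{rho} delivers the claimed limit from below.

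\textbf{Upper bound in \refeq{loglapNp}.} This is the analytic core. Chop $[0,t]$ into blocks of length proportional to $\beta_t^\alpha$ and use the sublinearity $N_p(\sum_i\ell^{(i)})\le\sum_iN_p(\ell^{(i)})$ together with Hölder to reduce to a single-block estimate. Rescale space by $\beta_t$ and time by $\beta_t^\alpha$: by \textbf{(H2)} the rescaled walk converges in distribution to a symmetric strictly $\alpha$-stable process on $\R^d$, and a Donsker--Varadhan-type LDP holds for its occupation measure with rate $g\mapsto\int|\omega|^\alpha|\F(g)(\omega)|^2 d\omega$ on $\acc{\nor{g}_2=1}$. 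Applying Varadhan's lemma to the continuous functional $g\mapsto\nor{g}_{2p}^p$ and recognizing the resulting variational expression as \refeq{rho} produces the desired upper bound; heat-kernel estimate \refeq{est-pt} supplies the exponential tightness and truncates the contribution of the three distinct dimensional regimes $d<\alpha$, $d=\alpha$, $d>\alpha$ in a unified manner.

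\textbf{Main obstacle.} The hardest point is the upper bound when $\alpha<2$. Heavy-tailed increments mean that a single macroscopic jump can break the coarse-graining at scale $\beta_t$, so one must truncate large jumps and pay for them using \refeq{est-pt}; in the recurrent cases $d\le\alpha$ this interacts non-trivially with the global spatial cut-off. A second, related subtlety is that $p$ is allowed to be non-integer, which forbids the clean combinatorial expansion of $N_p(l_t)^n$ as a multi-intersection; the remedy is the dual formulation $N_p(l_t)=\sup\acc{\bra{l_t,g}:\nor{g}_q=1}$, at the price of a delicate uniform control over the test functions $g$ in order to extract the exact constant $\rho_{\alpha,d,p}$ rather than a weaker inequality.
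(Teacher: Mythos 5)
Your reduction of \refeq{PGDSILT} to \refeq{loglapNp} via G\"artner--Ellis is exactly what the paper records in the remark after the theorem, and your lower-bound sketch is essentially the standard Donsker--Varadhan change-of-measure argument which is spiritually (though not technically) equivalent to the paper's spectral/Jensen argument in Section 5. The problem is the upper bound, and the obstacle you point to is not the real one.

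The step ``Applying Varadhan's lemma to the continuous functional $g\mapsto\nor{g}_{2p}^p$'' is the gap. The functional $\mu\mapsto\nor{d\mu/dx}_p$ on $\MM_1(\R^d)$ is only lower semicontinuous, \emph{not} continuous, in the weak topology in which Donsker--Varadhan lower semicontinuous rate functions and exponential tightness are available; Varadhan's lemma and the contraction principle then give the \emph{lower} bound for free but produce nothing usable for the upper bound. This is precisely the difficulty the paper flags at the outset: ``The main problem here is that the latter function is lower semicontinuous but not continuous in weak topology. We can therefore deduce the lower bounds \ldots\ but this is no more the case for the upper bounds.'' Mollifying $L_t$ to restore continuity is the route of Bass--Chen--Li--Rosen and requires regularity of local times (or a bisection argument that needs $p=2$), neither of which is available here. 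The ``Main obstacle'' paragraph you give (heavy tails, non-integer $p$, the dual formulation $N_p(l_t)=\sup_{\nor{g}_q=1}\bra{l_t,g}$) is aimed at the wrong target: the duality inequality $\bra{l_t,g}\le N_p(l_t)$ only helps from below, and the block decomposition of $[0,t]$ into windows of length $\beta_t^\alpha$ is what the paper uses in Section 5 for the \emph{lower} bound.

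The paper's actual upper bound is built on a completely different mechanism: project the walk onto the discrete torus $\T_{R\beta_t}$, kill at an independent exponential time $\tau\sim\EE(a\beta_t^{-\alpha})$, invoke Eisenbaum's version of Dynkin's isomorphism (Theorem \ref{Eisenbaum}) to compare $N_{p}(l_{R\beta_t,\tau})$ with the $\ell_{2p}$-norm of a Gaussian field $Z$ whose covariance is the killed Green kernel, and then use Gaussian concentration (Lemma \ref{Z}). The variational constant $\rho(a,R,t)$ that emerges is a discrete quantity on $\T_{R\beta_t}$, and the final and hardest step is Proposition \ref{OKCAVA4}, which passes $\rho(a,R,t)\to\rho(a,R)\to\rho(a)$ by interpolating discrete minimizers into $\R^d$-functions; for $\alpha<2$ this interpolation is done via Fourier multipliers (the kernel $\varphi$), replacing the linear interpolation used for $\alpha=2$ in earlier work. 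None of this is present in your proposal, and without a substitute for the Eisenbaum step your upper bound does not close.
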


\begin{remarque}
 Denoting $r_t = t^p\beta_t^{-d(p-1)}$, we stress the fact that the conditions on $\beta_t$ appearing in theorem \ref{OKCAVA3}
are equivalent to $t^p \gg r_t \gg \E(I_t)$. Moreover, using the fact that the function $\theta \mapsto \va{\theta}^{(\alpha q)/(\alpha
q-d)}$ is essentially smooth, \refeq{PGDSILT} is a straightforward consequence of \refeq{loglapNp} and 
the G\"artner-Ellis theorem for positive random
variables large deviations principle (see Theorem 1.2.3 in \cite{chen}), and is equivalent to a full large deviations
principle for $t^{-p} \beta_t^{d(p-1)} I_t$ (see Theorem 1.2.1 in \cite{chen}).
\end{remarque}

\subsection{Comments on the proof. Literature remarks.}
   To understand the main difficulty in obtaining Theorem \ref{OKCAVA3}, let us introduce for any $x \in \R^d$,
\[ L_t(x) = \frac{\beta_t^d}{t} l_t(\lfloor \beta_t x \rfloor) \, .
\]
Note that $L_t$ is an element of the  set $\MM_1(\R^d)$  of probability measures on $\R^d$. Following Donsker-Varadhan's 
results, it is easy to check that $L_t$
satifies a weak large deviations principles on $\MM_1(\R^d)$, endowed with the weak convergence defined by duality 
with compactly supported continuous functions (see section 5 Theorem 11). The speed of this large deviations principle is $t/\beta_t^{\alpha}$, while its rate 
function is defined by
\[ \II(\mu) = \begin{cases}
\int_{\R^d} \va{\omega}^{\alpha} \va{\FF(g)(\omega)}^2 \, d \omega 
& \mbox{ if } d \mu = g^2 dx ,
\\
+ \infty & \mbox{ otherwise. }
\end{cases}
\]
 Moreover $\nor{L_t}_p^p = \beta_t^{d(p-1)} t^{-p} I_t$. Applying the contraction principle to $\mu   \in \MM^1(\R^d) 
 \mapsto \nor{\frac{d \mu}{dx}}_{p}$ would lead to Theorem \ref{OKCAVA3}. The main problem here is that the latter function is lower semicontinuous but not continuous in weak topology. We can therefore deduce the lower bounds in Theorem \ref{OKCAVA3} by
 contraction, but this is no more the case for the upper bounds. To circumvent this problem, various strategies have been proposed. 
 One can first try to smoothen $L_t$. Given a regularizing kernel $\phi_{\epsilon}$, the map $\mu   \in \MM^1(\R^d) 
 \mapsto \nor{\phi_{\epsilon} \star \frac{d \mu}{dx}}_{p}$ is now continuous in weak topology, so that 
 we just have to prove that $\nor{\phi_{\epsilon} \star L_t}_p$ and   $\nor{L_t}_p$ are exponentially close.
  Typically, this can   be done in 
 "small" dimension using the regularity of local times. 
 Roughly speaking, this is the way followed by Bass, Chen \& Rosen in a series of works starting with
 the paper of Chen \& Li \cite{CL} about the large deviations of the self-intersections ($p=2$) of Brownian motion or random
 walk in $d=1$. Later, they studied the cases $d=1, \alpha > d$ in \cite{CLR}, $p=d=\alpha=2$ in \cite{BCR}, $p=2$ and
 $d \ge \alpha > 2d/3$ in \cite{BCR2}. In $d=1$, they worked directly on the self-intersections of the limiting stable
 process, and used the existence and the regularity of their local times, thus reducing their result to $\alpha > d$. 
 In $d \ge 2$, they first proved large deviations results for intersections of $p$-independent processes or random walks
 using a regularisation procedure in \cite{C04,CR05}, and then transfered these results to large deviations for self-intersections. Here the 
 assumption $p=2$ is crucial since this tranfer is done via the bisection method introduced by Varadhan in \cite{V}, which
 does not work for $p \ne 2$.  Becker and K\"onig \cite{BK} proved Theorem \ref{OKCAVA3} for  $\alpha=2$, $p< 2 q/p$ ($ < 2q$) and
 $\beta_t \ll (t/ \log(t))^{d/(d+2)}$, using an upper bound for the joint density of the local times of a Markov chain 
 obtained in \cite{BHK}. Unfortunately, this upper bound is not precise enough to obtain the result in its full extent. We propose
 here to use a method of proof  introduced in \cite{C} in the critical case, and successfully extended in \cite{ClemSPA} to 
 the supercritical case, and to the subcritical case for $\alpha=2$ in \cite{ClemEJP}.  One of the main tool in the proof is the Dynkin's isomorphism theorem, according to which the law of the local times of a symmetric recurrent Markov process stopped at an independent exponential time, is related to the law of the square of a Gaussian process whose covariance function is the Green kernel of the
stopped Markov process. This allows to control the exponential moments of $N_p(l_t)$ by the
exponential moments of  $N_{2p,R\beta_t}(Z)^2$  where 
 \begin{itemize}
 \item  $N_{2p,R\beta_t}(f)$ is the $\ell_{2p}$-norm  of a real valued function $f$ defined on $\T_{R \beta_t}$ the discrete torus
 of radius $R \beta_t$; 
 \item $(Z_x, x \in  \T_{R \beta_t})$ is a centered Gaussian process independent of $(X_t, t \ge 0)$,  whose covariance function is given by 
  the Green kernel $G_{R \beta_t, \lambda_t}(x,y)$ of  $(X_t, t \ge 0)$ projected on $\T_{R \beta_t}$,
  stopped at an independent exponential time with parameter $\lambda_t = a \beta_t^{- \alpha}$. 
  \end{itemize}
  Loosely speaking, this leads to a non asymptotic upper bound
  \[ \P (N_p(l_t) \ge \theta^{1/p} t \beta_t^{-d/q}) \le \exp(a t \beta_t^{-\alpha} (1+o(1)) 
   \P (N_{p,R\beta_t}(Z^2) \ge 2 \theta^{1/p} t \beta_t^{-d/q}) ,
   \]
   where the first term in the right hand side comes from the stopping at an exponential time. 
As soon as the median of $N_{p,R\beta_t}(Z^2)$ is negligible with respect to  $t \beta_t^{-d/q}$ (which is equivalent
to the conditions on $\beta_t$),  concentration inequalities for norms of Gaussian processes yields 
\[ \P (N_p(l_t) \ge \theta^{1/p} t \beta_t^{-d/q}) \le \exp(a t \beta_t^{-\alpha} (1+o(1))  
\exp(- \theta^{1/p} t \beta_t^{-d/q} \sigma^{-2}) ,
\]
where $\sigma^2$ is the maximal variance of the process $Z$ viewed as an element of $l_{2p}(\T_{R\beta_t})$:
\[ \sigma^2 = \sup_{f:\T_{R\beta_t}\rightarrow \R} \acc{\bra{f, G_{R\beta_t, \lambda_t}  f}_{R\beta_t}; N_{(2p)', R \beta_t}(f)=1}
\, ,
\]
where $\langle\cdot ,\cdot\rangle_{R\beta_t}$ is the scalar product on $l^2(\T_{R\beta_t}),$
and $ G_{R\beta_t, \lambda_t} f(x)=\sum_{y\in\T_{R\beta_t}} G_{R\beta_t, \lambda_t}(x,y) f(y)$ for any $f\in l^2(\T_{R\beta_t}) $ and any $x\in \T_{R\beta_t}$. 
Since $G_{R\beta_t, \lambda_t} =( \lambda_t \mbox{Id} - A_{R \beta_t})^{-1}$ (where $A_{R \beta_t}$ is the generator
of the random walk on the torus), we get  
 \begin{equation}
 \label{NAUB}
  \P (N_p(l_t) \ge \theta^{1/p} t \beta_t^{-d/q}) \le \exp(- t \beta_t^{-\alpha} (- a + \theta^{1/p} 
 \beta_t^{\alpha - d/q} \rho(a,R,t) + o(1)) )
 \, , 
 \end{equation}
 with

%
%
%
%
%
\begin{equation}
\label{rhoart}
\rho(a,R,t)= \inf_{h:\T_{R\beta_t}\rightarrow \R}\acc{a\beta_t^{-\alpha} N_{2,R\beta_t}^2(h)  - <h,A_{R\beta_t} h>_{R\beta_t}, N_{2p,R\beta_t}(h)=1}.
\end{equation}
Up to this point, the proof is the same in the critical case, the supercritical case, or the subcritical case. 
  We just have to use the correct scalings. It remains now
to study the limit of the upper bound \refeq{NAUB} when $t$ goes to infinity. This is where the proofs differ. In the 
critical case and the supercritical case, the limiting constant is still given by a variational formula involving functions
defined on the grid $\Z^d$ (see \cite{C, ClemSPA}), while on the subcritical case, the limiting
constant is given by a variational formula involving functions defined on $\R^d$. Therefore, we have to interpolate
the minimisers in $\rho(a,R,t)$. When $\alpha=2$, the operator $A$ is local, and this interpolation is done via
linear interpolation (see \cite{ClemEJP}). This proof does not work anymore when $\alpha < 2$, and we 
use here interpolation via Fourier transform. 

\vspace{.5cm}
The paper is organized as follows. Section \ref{prelim} is a reminder of classical results in Fourier analysis. 
Section \ref{ExpUB} is devoted to the upper bound in \refeq{loglapNp}. 
It relies on  Proposition \ref{OKCAVA4} giving the asymptotic behavior of $\rho(a,R,t)$, and Lemma \ref{G} about estimates 
of  $G_{R \beta_t, \lambda_t}(0,0)$ whose proofs are presented  in Section \ref{limrho}. 
Finally, we prove the lower bound in \refeq{loglapNp} in Section \ref{ExpLB}.

%

\section{Preliminaries results on Fourier transforms}
\label{prelim}
We gather in this section notations and well known results about Fourier transform used throughout the paper.  
\subsection{Fourier transform on the discrete torus.}
Let $R$ be an integer and let  $u: \T_R \mapsto \C$ be a function defined on the discrete 
$d$-dimensional torus $\T_R$ 
of radius $R$. We denote by 
$F_R\pare{u}:\T_R \mapsto \C$ its Fourier transform:  
\begin{equation*}
\forall n \in \T_R, F_{R}\pare{u}(n)=\sum_{k \in \T_R} u(k) \exp\pare{-2i\pi \frac{<k,n>}{R}}.
\end{equation*}
We have an inversion formula and a Parseval formula:
\begin{prop}
\label{inversion finie}
For $u: \T_R \mapsto \C$,
\begin{align*}
&\forall k \in \T_R, u(k)=\frac{1}{R^d}\sum_{n \in \T_R}F_R\pare{u}(n)\exp\pare{2i\pi\frac{<k,n>}{R}}  \\
&\text{and }\sum_{n \in \T_R}\va{F_R\pare{u}(n)}^2=R^d\sum_{k \in \T_R}\va{u(k)}^2.
\end{align*}
\end{prop}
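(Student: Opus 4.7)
The plan is to derive both identities from the orthogonality relation for additive characters on the discrete torus. First I would establish
\[
\sum_{k\in\T_R}\exp\pare{2i\pi\frac{\bra{k,m}}{R}} = R^d\, \ind\acc{m \equiv 0 \bmod R},
\]
valid for every $m \in \T_R$. Writing $k = (k_1,\ldots,k_d)$ and $m = (m_1,\ldots,m_d)$, the sum factorizes as $\prod_{\ell=1}^d \sum_{k_\ell=0}^{R-1} \exp(2i\pi k_\ell m_\ell/R)$, and each one-dimensional factor is a geometric sum equal to $R$ when $m_\ell \equiv 0 \bmod R$ and to $0$ otherwise, since in the latter case it is the sum of all $R$-th roots of unity.

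For the inversion formula, I would substitute the definition of $F_R(u)(n)$ into the right-hand side of the claimed identity, swap the two finite sums (no convergence issue arises on the finite torus), and apply the orthogonality relation to the resulting inner sum over $n$ with character indexed by $k - k'$. This collapses the double sum to the single term $k' = k$, recovering $u(k)$.

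For Parseval, I would expand $\va{F_R(u)(n)}^2 = F_R(u)(n)\,\overline{F_R(u)(n)}$ using the definition of $F_R$, exchange the three finite sums so that the sum over $n$ becomes innermost, and apply orthogonality once more. The sum over $n$ forces equality of the two position variables, and after collapsing this constraint what remains is $R^d \sum_{k\in\T_R} \va{u(k)}^2$, which is the desired identity.

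There is no substantial obstacle here: the proposition is a standard exercise in discrete Fourier analysis that follows entirely from orthogonality of characters. The only minor care needed is in the bookkeeping of indices on the torus, the inner product $\bra{\cdot,\cdot}$ being interpreted modulo $R$.
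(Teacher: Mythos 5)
Your proof is correct and complete. The paper states this proposition without proof, presenting it as a classical fact in the preliminaries section; your argument via orthogonality of additive characters on $\T_R$ is precisely the standard derivation one would supply, so there is nothing to compare against and nothing to add.
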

\subsection{Fourier transform on $\Z^d$.}
For a sequence $u\in l_2(\Z^d)$, we denote by $F(u)$ its Fourier transform. $F\pare{u}$
is the 1-periodic function defined by
\begin{equation*}
\forall \omega\in \R^d, F\pare{u}(\omega)=\sum_{z\in\Z^d}u(z)\exp\pare{-2i\pi<z,\omega>}.
\end{equation*}
 There is an inversion and Parseval's formulas:
\begin{prop}
\label{généralisation parseval}
\begin{equation*}
u(z)=\int_{[-1/2,1/2]^d} F\pare{u}(\omega)\exp(2i\pi<z,\omega>)d\omega
\text{ and }
 \nor{F\pare{u}}_{2,[-1/2,1/2]^d}^2=N_{2}^2(u)
\end{equation*}
\end{prop}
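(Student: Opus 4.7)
The plan is to derive both the inversion formula and Parseval's identity simultaneously by showing that $\{e_z\}_{z\in\Z^d}$, where $e_z(\omega)=\exp(2i\pi\langle z,\omega\rangle)$, is a complete orthonormal system (Hilbert basis) of $L^2([-1/2,1/2]^d)$. Once this is established, the map $F$ in the statement is (up to complex conjugation in the kernel) the inverse of the natural isometric isomorphism $L^2([-1/2,1/2]^d) \to \ell_2(\Z^d)$ sending a function to its sequence of Fourier coefficients.

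First I would verify the orthonormality $\int_{[-1/2,1/2]^d} e_{z-z'}(\omega)\, d\omega = \delta_{z,z'}$ by Fubini and the elementary one-dimensional computation $\int_{-1/2}^{1/2}\exp(2i\pi k \omega)\,d\omega = \delta_{k,0}$ for $k\in\Z$. Then I would establish completeness: by Stone--Weierstrass, the complex algebra generated by the $e_z$'s is dense in the space of continuous 1-periodic functions on $\R^d$ for the uniform norm; since continuous periodic functions are dense in $L^2([-1/2,1/2]^d)$, density in $L^2$ follows.

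With the Hilbert basis in hand, for any $u\in\ell_2(\Z^d)$ first consider the case of finite support, so that $F(u)(\omega)=\sum_z u(z)\overline{e_z(\omega)}$ is a finite trigonometric sum. Its $L^2$ norm on $[-1/2,1/2]^d$ equals $N_2(u)$ by orthonormality, and the inversion formula reduces to computing $\int_{[-1/2,1/2]^d} F(u)(\omega) e_z(\omega)\,d\omega$ term by term, again using orthonormality. The general case $u\in\ell_2(\Z^d)$ then follows by approximating $u$ by its truncations $u_N = u\,\mathbf{1}_{[-N,N]^d}$: the sequence $F(u_N)$ is Cauchy in $L^2([-1/2,1/2]^d)$ (since $\|F(u_N)-F(u_M)\|_2 = N_2(u_N-u_M)\to 0$), the limit defines $F(u)$, and passing to the limit in Parseval's identity and in the inversion formula (the latter after extracting a subsequence converging a.e., or equivalently by continuity of the linear functional $f\mapsto\int f\,e_z$) concludes the proof.

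There is essentially no obstacle here since the statement is a classical fact about Fourier series on $\T^d$; the only mild care needed is the order of the two Fourier transforms used in the paper (here the analysis transform $F$ maps sequences on $\Z^d$ to functions on $\T^d$, whereas the transform $\mathcal{F}$ defined in \eqref{Fourier} goes in the reverse conventional direction on $\R^d$), so I would make sure that the kernel used in the inversion formula is indeed the complex conjugate of the one used in the definition of $F$, which is automatic from the way $F$ is defined in the statement.
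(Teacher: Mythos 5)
The paper gives no proof of this proposition at all: it is stated in the preliminaries as a ``well known result,'' on a par with Propositions~\ref{inversion finie} and~\ref{inversion périodique}, and the reader is expected to supply the classical argument. Your write-up is a correct and complete rendering of that classical argument: orthonormality of $\{e_z\}$ by the elementary computation, completeness by Stone--Weierstrass plus density of continuous periodic functions in $L^2$, and then the passage from finitely supported $u$ to general $u\in\ell_2(\Z^d)$ by Cauchy-in-$L^2$ truncations. That is exactly the standard route, and the remark at the end about keeping track of which kernel carries the conjugate is the one point where a careless write-up could go astray; you handle it correctly.
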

These two types of Fourier transforms are linked through periodization: if  $\forall x\in\T_R$, 
$g_R(x)=\sum_{z\in\Z^d}g(x+Rz)$ is the periodised version of $g$, then 
\begin{equation}
\label{lien Fourier fini infini}
\forall x\in\T_R,  F_R\pare{g_R}(x)=F\pare{g}\pare{\frac{x}{R}}.
\end{equation}

\subsection{Fourier transform on the torus.}
For $g_R$ a $R$-periodic function on $\R^d$, we consider its Fourier coefficients:
\begin{equation*}
\forall n\in\Z^d, \F_R(g_R)(n)=\frac{1}{R^d}\int_{[0,R]^d} g_R(x)\exp\pare{-2i\pi\frac{<n,x>}{R}}dx.
\end{equation*}
Denoting by $\nor{ \cdot}_{2,R}$ the norm in $L^2([0,R]^d)$, we get the following inversion and Parseval's formulas:
\begin{prop}
\label{inversion périodique}
\begin{equation*}
\forall x\in\R^d,\ g_R(x)=\sum_{n\in\Z^d}\F_R(g_R)(n)\exp\pare{2i\pi \frac{<n,x>}{R}}
\text{ and } N_2^2\pare{\F_R(g_R)}=\frac{1}{R^d}\nor{g_R}_{2,R}^2.
\end{equation*}
\end{prop}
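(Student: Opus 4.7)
The statement is the standard inversion and Parseval identity for Fourier series on the compact abelian group $(\R/R\Z)^d$, transcribed to the paper's normalization convention for $\F_R$. I would present it as a short consequence of Hilbert space theory, in three steps.

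First, I would introduce the system of exponentials $e_n(x) = R^{-d/2}\exp(2i\pi \langle n,x\rangle/R)$ for $n \in \Z^d$ and check orthonormality in $L^2([0,R]^d)$ by a direct coordinatewise computation: each one-dimensional integral $\int_0^R \exp(2i\pi k x_j/R)\,dx_j$ equals $R$ if $k=0$ and $0$ otherwise, so $\int_{[0,R]^d} e_n(x)\overline{e_m(x)}\,dx = \delta_{n,m}$.

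Second, I would establish completeness of the family $(e_n)_{n\in\Z^d}$. By the Stone--Weierstrass theorem on the compact group $(\R/R\Z)^d$, the algebra of trigonometric polynomials $\mathrm{span}\{e_n : n \in \Z^d\}$ contains the constants, is closed under complex conjugation, and separates points; hence it is dense in the space of continuous $R$-periodic functions for the uniform norm, and therefore dense in $L^2([0,R]^d)$. Thus $(e_n)_{n\in\Z^d}$ is an orthonormal basis.

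Third, I would translate the abstract Hilbert expansion into the paper's notation. For any $g_R \in L^2_{\mathrm{per}}([0,R]^d)$, one has $g_R = \sum_{n\in\Z^d} \langle g_R, e_n\rangle\, e_n$ with convergence in $L^2$, and the inner product computes to
\[
\langle g_R, e_n\rangle = R^{-d/2}\int_{[0,R]^d} g_R(x)\exp(-2i\pi \langle n,x\rangle/R)\,dx = R^{d/2}\,\F_R(g_R)(n).
\]
Substituting back produces $g_R(x) = \sum_{n\in\Z^d} \F_R(g_R)(n)\exp(2i\pi \langle n,x\rangle/R)$, which is the inversion formula (understood in $L^2$, and pointwise as soon as $g_R$ is sufficiently regular, as will be the case whenever the formula is invoked later). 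Parseval's identity $\sum_n |\langle g_R, e_n\rangle|^2 = \|g_R\|_{2,R}^2$ then becomes $R^d \sum_n |\F_R(g_R)(n)|^2 = \|g_R\|_{2,R}^2$, i.e.\ $N_2^2(\F_R(g_R)) = R^{-d}\|g_R\|_{2,R}^2$. There is no genuine obstacle; the only thing to watch is the asymmetric $R^{-d}$ normalization hidden in the definition of $\F_R$, which is precisely what produces the $1/R^d$ on the right-hand side of Parseval and, unlike in Proposition \ref{inversion finie}, leaves the inversion formula with coefficient $1$.
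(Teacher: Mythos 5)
The paper states Proposition \ref{inversion périodique} in the preliminaries section as a classical fact and gives no proof, so there is no argument of the authors' to compare against. Your proof is the standard one via the orthonormal basis $(e_n)_{n\in\Z^d}$ of $L^2([0,R]^d)$: orthonormality by direct computation, completeness by Stone--Weierstrass on the compact group $(\R/R\Z)^d$, then the expansion and Bessel/Parseval identity translated into the paper's (asymmetric) normalization of $\F_R$. The bookkeeping of the $R^d$ factors is done correctly, and the remark that the pointwise statement ``$\forall x\in\R^d$'' should be read in $L^2$ in general, with pointwise validity under the regularity present in the paper's applications, is a legitimate caveat worth keeping --- the paper states the formula pointwise without comment, and you are right that this requires either $L^2$-interpretation or extra regularity of $g_R$.
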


\subsection{Fourier transform on $\R^d$.}
When $g: \R^d \mapsto \R$ is an element of $L^2(\R^d)$, we define its Fourier transform $\F(g): \R^d \mapsto \C$
by equation \refeq{Fourier}. With this definition, Parseval's identity and inversion formula read 
\[
\nor{g}_2 = \nor{\F(g)}_2 \, , \,\,\, g(x) = \int_{\R^d} \F(g)(\omega) \exp(2i\pi \bra{x,\omega}) \, d\omega \, , x \in \R^d .
\]

To end  this section, we remind the reader the following Young inequalities: 
\begin{prop}
\label{inégalité Fourier}
let $p>2$ and $q$ be  its conjugate exponent.
For  any $u: \T_R \mapsto \C$,
\begin{equation*}
N_{p,R}(u)\leq R^{-d/q} N_{q,R}(F_R(u)). 
\end{equation*}
For any $R$-periodic function $g_R: \R^d \mapsto \C$,
\begin{equation*}
\nor{g_R}_{p,R}\leq R^{d/p} N_{q}(\F_R(g_R)).
\end{equation*}
\end{prop}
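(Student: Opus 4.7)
The plan is to recognise both inequalities as instances of the Hausdorff--Young inequality, obtained by Riesz--Thorin interpolation between two endpoint bounds furnished by the inversion and Parseval formulas of Propositions~\ref{inversion finie} and~\ref{inversion périodique}.

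For the first bound, I view $u$ as the image of $F_R(u)$ under the linear map $\mathcal{T}v(k) = R^{-d} \sum_{n \in \T_R} v(n) \exp(2i\pi \langle k,n\rangle / R)$, which by Proposition~\ref{inversion finie} satisfies $\mathcal{T}(F_R(u)) = u$. Two endpoint estimates are immediate: the trivial bound $\va{\mathcal{T}v(k)} \le R^{-d} N_{1,R}(v)$ gives $\mathcal{T} : \ell_1(\T_R) \to \ell_\infty(\T_R)$ with operator norm at most $R^{-d}$, while Parseval's identity gives $N_{2,R}(\mathcal{T}v) = R^{-d/2} N_{2,R}(v)$, hence operator norm $R^{-d/2}$ at $\ell_2 \to \ell_2$. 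Complex Riesz--Thorin interpolation at the conjugate pair $(q,p)$ with $1 \le q \le 2 \le p \le \infty$, $1/p + 1/q = 1$, then produces an $\ell_q \to \ell_p$ norm bounded by $(R^{-d})^{1-2/p} (R^{-d/2})^{2/p} = R^{-d/q}$, which is exactly the first claim.

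For the second bound, the same scheme is applied to the linear map $\mathcal{S}c(x) = \sum_{n \in \Z^d} c_n \exp(2i\pi \langle n,x\rangle/R)$, which by Proposition~\ref{inversion périodique} satisfies $\mathcal{S}(\F_R(g_R)) = g_R$. The endpoints now read $\nor{\mathcal{S}c}_{\infty,R} \le N_1(c)$, so $\ell_1(\Z^d) \to L^\infty([0,R]^d)$ has norm at most $1$, and $\nor{\mathcal{S}c}_{2,R} = R^{d/2} N_2(c)$ by Parseval, so $\ell_2(\Z^d) \to L^2([0,R]^d)$ has norm $R^{d/2}$. Interpolation gives norm at most $1^{1-2/p} (R^{d/2})^{2/p} = R^{d/p}$ from $\ell_q(\Z^d)$ to $L^p([0,R]^d)$, which is the second inequality. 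The argument is classical Hausdorff--Young and no step is a genuine obstacle; the only point requiring care is the bookkeeping of constants at the two endpoints so that the interpolation exponents reproduce the factors $R^{-d/q}$ and $R^{d/p}$ stated in the proposition.
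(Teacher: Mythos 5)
Your proof is correct and follows exactly the route the paper has in mind: the paper's one-line remark points to Riesz--Thorin interpolation and Parseval's formula, and your argument is the standard Hausdorff--Young interpolation between the $\ell_1\to\ell_\infty$ and $\ell_2\to\ell_2$ endpoints for the inverse Fourier map, with the bookkeeping of the powers of $R$ carried out correctly at $\theta=2/p$.
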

These inequalities are straightforward applications of the Riesz-Thorin interpolation theorem (see Theorem 1.3.4 in \cite{Gra} for example) and Parseval's formula.

\section{Exponential moments upper bound} 
\label{ExpUB}

This section is devoted to the proof of the upper bound in \refeq{loglapNp}.

Let $a > 0$ and
 $\tau$ be an exponential time of parameter $\lambda_t=a\beta_t^{-\alpha}$ independent of $(X_t, t \ge 0)$.
Let $(X_s^{R\beta_t}, s \ge 0)$ be the projection of $(X_t, t \ge 0)$ on the discrete torus $\T_{R \beta_t}$:
\[ X_s^{R\beta_t}= X_s \, \, \mbox{mod}(R \beta_t) \, .
\] 
Let $l_{R\beta_t,t}(x)=\int_0^{t} 
\delta_x (X_s^{R\beta_t}) \, ds$ be its local time at site $x$ up to time $t$.


\begin{align*}
N_p^p(l_t) &= \sum_{x \in \Z^d} l_t^p(x) 
	 = \sum_{x \in \T_{R\beta_t}} \sum_{k \in \Z^d} l^p_{t}(x+kR\beta_t)\\
    & \leq  \sum_{x \in \T_{R\beta_t}} \pare{\sum_{k \in \Z^d} l_t(x+kR\beta_t)}^p
      =\sum_{x \in \T_{R\beta_t}} l^p_{R\beta_t,t}(x) = N_{p,R\beta_t}^p(l_{R\beta_t,t}).
\end{align*}

Then using the fact that $\tau\sim\mathcal{E} (\lambda_t)$ is independent of $(X_s,s\geq 0)$ we get for all 
$\theta > 0$:
\begin{align}
 \E\cro{\exp\pare{\theta\beta_t^{d/q-\alpha}N_p(l_t)}}  \exp\pare{-t\lambda_t}
		&\leq \mathbb{E}\cro{\exp\pare{\theta\beta_t^{d/q-\alpha}N_{p,R\beta_t}(l_{R\beta_t,t})}; \tau\geq t}\\
\label{UB1}
		& \leq\mathbb{E}\cro{\exp\pare{\theta\beta_t^{d/q-\alpha}N_{p,R\beta_t}(l_{R\beta_t,\tau})}}
\end{align}

We are now going to use the following version of Dynkyn's isomorphism theorem:
\begin{theo} (Eisenbaum, see for instance corollary 8.1.2 page 364 in \cite{MR06}). \\
\label{Eisenbaum}
Let $\tau \sim \EE(\lambda_t)$ independent of $(X_s, s \geq 0)$, and
let $(Z_x, x \in \T_{R\beta_t})$ be a centered Gaussian process 
with covariance matrix $G_{R\beta_t,\lambda_t}(x,y)=E_x\cro{\int_0^\tau \delta_y(X_s^{R\beta_t})ds}$ 
independent of $\tau$  and of the  random walk $(X_s, s \geq 0)$. 
For $s\neq 0$, consider the process 
$S_x := l_{R\beta_t,\tau}(x) + \frac{1}{2} (Z_x+s)^2$. Then for all measurable
and bounded function $F : \R^{\T_{R\beta_t}} \mapsto \R$: 
\begin{equation*}
\E\cro{F((S_x; x\in \T_{R\beta_t}))}
= \E\cro{F\pare{(\frac{1}{2}(Z_x +s)^2;x \in \T_{R\beta_t})} \, \pare{1 + \frac{Z_0}{s}}}
\, . 
\end{equation*} 
\end{theo}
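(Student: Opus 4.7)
The plan is to verify Eisenbaum's identity on a Laplace-transform-determining class of test functions. By a monotone class argument it suffices to check it for exponential functionals $F(\phi) = \exp(-\langle f, \phi\rangle_{R\beta_t})$ with arbitrary $f : \T_{R\beta_t} \to \R_+$, since the joint law on $\R^{\T_{R\beta_t}}$ is characterized by its multivariate Laplace transform on the positive cone.

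For such $F$ the exponential splits as a product and, by the independence of $(l_{R\beta_t,\tau}(x))_x$ from $(Z_x)_x$, the left-hand side becomes $\E_0[\exp(-\langle f, l_{R\beta_t,\tau}\rangle)]\cdot \E[\exp(-\frac{1}{2}\langle f,(Z+s\mathbf{1})^2\rangle)]$. Writing $M_f$ for multiplication by $f$ and $G := G_{R\beta_t,\lambda_t} = (\lambda_t - A_{R\beta_t})^{-1}$, the first factor is a Feynman--Kac computation: integrating $e^{-\lambda_t t}$ against the Feynman--Kac semigroup in $t$ and using the resolvent identity $\lambda_t G\mathbf{1} = \mathbf{1}$ yields $[(I+GM_f)^{-1}\mathbf{1}](0)$. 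The second factor is a finite-dimensional Gaussian integral; completing the square in $Z$ gives $\det(I+GM_f)^{-1/2}\exp(-\frac{s^2}{2}\langle \mathbf{1}, M_f(I+GM_f)^{-1}\mathbf{1}\rangle)$.

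For the right-hand side I would redo the same Gaussian computation but now carry the extra linear weight $(1+Z_0/s)$. After the same change of variables, the $0$-th coordinate of the post-shift mean equals $\mu_0 = -s\,[(I+GM_f)^{-1}GM_f\mathbf{1}](0)$, so the contribution of the linear term $Z_0/s$ to the Gaussian expectation is $\mu_0/s$, and hence the total scalar factor becomes $1 + \mu_0/s = 1 - [(I+GM_f)^{-1}GM_f\mathbf{1}](0) = [(I+GM_f)^{-1}\mathbf{1}](0)$, where the last equality uses the telescoping identity $(I+GM_f)^{-1}GM_f = I - (I+GM_f)^{-1}$. Multiplied by the same Gaussian prefactor as on the LHS, this exactly matches the LHS, and the monotone class extension then closes the argument.

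The main obstacle, and what makes the identity look surprising, is precisely this last algebraic matching: the Feynman--Kac resolvent scalar $[(I+GM_f)^{-1}\mathbf{1}](0)$ that encodes the local-time side of the LHS must be reproduced on the RHS through nothing more than the innocuous-looking shift factor $(1+Z_0/s)$. This rests entirely on the resolvent identity $\lambda_t G\mathbf{1} = \mathbf{1}$; without the $Z_0/s$ correction one recovers only the classical Dynkin isomorphism, in which the Markov process would start from a Gaussian-randomized initial point, and Eisenbaum's linear shift is exactly what trivializes this randomization.
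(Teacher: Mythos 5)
The paper itself does not prove this theorem: it is quoted verbatim from Marcus--Rosen \cite{MR06} (Corollary 8.1.2), so there is no in-paper argument to measure your proposal against. Your blind proof is nonetheless correct and is the standard proof of Eisenbaum's isomorphism via joint Laplace transforms. Each step checks: the functional monotone class reduction to exponentials $\exp(-\langle f,\cdot\rangle_{R\beta_t})$ with $f\ge 0$ is valid because both sides define finite signed measures on $\R_+^{\T_{R\beta_t}}$ (the weight $1+Z_0/s$ is integrable) and this family of exponentials separates points; the Feynman--Kac evaluation $\E_0\bigl[\exp(-\langle f,l_{R\beta_t,\tau}\rangle)\bigr]=\bigl[(I+GM_f)^{-1}\mathbf 1\bigr](0)$ follows from $(\lambda_t-A_{R\beta_t}+M_f)^{-1}=(I+GM_f)^{-1}G$ together with $\lambda_t G\mathbf 1=\mathbf 1$ (conservativity of $A_{R\beta_t}$); and on the Gaussian side, completing the square gives effective covariance $\Sigma=(I+GM_f)^{-1}G$ and mean $\mu=-s\,\Sigma M_f\mathbf 1$, so that the weight contributes $1+\mu_0/s=\bigl[(I-(I+GM_f)^{-1}GM_f)\mathbf 1\bigr](0)=\bigl[(I+GM_f)^{-1}\mathbf 1\bigr](0)$ by the telescoping identity, matching the local-time factor exactly. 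Your closing interpretation --- that without the $Z_0/s$ correction one lands back on Dynkin's isomorphism with a Gaussian-randomized starting point, and that the linear shift in Eisenbaum's version is precisely what undoes this randomization --- is also the right intuition.
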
 

This theorem allows to compare the tail behavior of $N_{p,R\beta_t}(l_{R\beta_t,\tau})$ with 
the tail behavior of  $N_{2p,R\beta_t}(Z)$.  Indeed, using that 
\[   S_x^p\geq l^p_{R\beta_t,\tau}(x) + \pare{\frac{1}{2} (Z_x+s)^2}^p \, ,
\]
and the independence of $(Z_x,x\in\T_{R\beta_t})$ with the random walk $(X_s,s\geq 0)$ and the exponential time $\tau$, we get  for all $\epsilon >0$,  and all $y >0$, 

\begin{align}
 \nonumber & \P\pare{N_{p,R\beta_t}^p(l_{R\beta_t,\tau})\geq y^pt^p \beta_t^{d(1-p)}} 
 \P\pare{\frac{1}{2^p}N_{2p,R\beta_t}^{2p}(Z+s)\geq t^p \beta_t^{d(1-p)}\epsilon^p}\\
\leq & \nonumber \P\pare{N_{p,R\beta_t}^p(l_{R\beta_t,\tau})+\frac{1}{2^p}N_{2p,R\beta_t}^{2p}(Z+s)\geq t^p\beta_t^{d(1-p)}(y^p+\epsilon^p)}\\
\leq &\nonumber \P\pare{N_{p,R\beta_t}^p(S)\geq t^p\beta_t^{d(1-p)}(y^p+\epsilon^p)}\\
= &\label{PPE} \E\cro{\pare{1+\frac{Z_0}{s}}; \frac{1}{2^p}N_{2p,R\beta_t}^{2p}(Z+s)\geq t^p\beta_t^{d(1-p)}(y^p+\epsilon^p)}
\end{align}
where the last equality comes from Theorem \ref{Eisenbaum}.
Moreover by H\"older's inequality, for all $\epsilon >0$,
\begin{align}
\nonumber
&\E\cro{\pare{1+\frac{Z_0}{s}} ; \frac{1}{2^p}N_{2p,R\beta_t}^{2p}(Z+s)\geq t^p\beta_t^{d(1-p)}(y^p+\epsilon^p)}\\
\label{EexpE}\leq &\E\cro{\va{1+\frac{Z_0}{s}}^{1+1/\epsilon}}^{\epsilon/(1+\epsilon)}
\P\pare{N_{2p,R\beta_t}^{2p}(Z+s)\geq 2^pt^p\beta_t^{d(1-p)}(y^p+\epsilon^p)}^{1/(1+\epsilon)}.
\end{align}
Combining  (\ref{PPE}) and (\ref{EexpE}), we obtain that for all $a,\epsilon >0$,
\begin{align}
\nonumber
& \P\pare{N_{p,R\beta_t}(l_{R\beta_t,\tau})\geq yt\beta_t^{-d/q}} \\
\label{PexpEP}
& \leq \E\cro{\va{1+\frac{Z_0}{s}}^{1+1/\epsilon}}^\frac{\epsilon}{1+\epsilon}
\frac{\P\pare{N_{2p,R\beta_t}(Z+s)\geq \sqrt{2t}\beta_t^{-d/(2q)}(y^p+\epsilon^p)^{1/(2p)}}^\frac{1}{1+\epsilon}}
{\P(N_{2p,R\beta_t}(Z+s)\geq \sqrt{2t\epsilon}\beta_t^{-d/(2q)})}  .
\end{align}

Then using the fact that $Var(Z_0)=G_{R\beta_t,\lambda_t}(0,0)$, and Markov's inequality,
we obtain that for all $\gamma > 0$, 
\begin{align}
\nonumber
&\P\pare{N_{p,R\beta_t}(l_{R\beta_t,\tau})\geq yt\beta_t^{-d/q}} \\
\label{EEexp}
\leq &C(\epsilon)\pare{1+\frac{\sqrt{G_{R\beta_t,\lambda_t}(0,0)}}{s}}
\frac{\P\pare{N_{2p,R\beta_t}(Z+s)\geq \sqrt{2t}\beta_t^{-d/(2q)}(y^p+\epsilon^p)^{1/(2p)}}^{1/(1+\epsilon)}}
{\P(N_{2p,R\beta_t}(Z+s)\geq \sqrt{2t\epsilon}\beta_t^{-d/(2q)})} \, ,
\\ 
\leq &C(\epsilon)\pare{1+\frac{\sqrt{G_{R\beta_t,\lambda_t}(0,0)}}{s}}
\frac{\E\cro{\exp\pare{\frac{\gamma}{2}\beta_t^{d/q-\alpha}N_{2p,R\beta_t}^2(Z+s)}}^{1/(1+\epsilon)}}
{\P(N_{2p,R\beta_t}(Z+s)\geq \sqrt{2t\epsilon}\beta_t^{-d/(2q)})} 
\exp\pare{-\gamma t\beta_t^{-\alpha}\frac{(y^p+\epsilon^p)^{1/p}}{1+\epsilon}} \, .
\end{align}
Therefore, 
\begin{align*}
&  \mathbb{E}\cro{\exp\pare{\theta\beta_t^{d/q-\alpha}N_{p,R\beta_t}(l_{R\beta_t,\tau})}} 
\\ 
& = 1+\int_0^{+\infty}\P\pare{N_{p,R\beta_t}(l_{R\beta_t,\tau})\geq yt\beta_t^{-d/q}}\theta t \beta_t^{-\alpha}\exp(\theta t y\beta_t^{-\alpha})dy\\
& \leq 1+C(\epsilon)\pare{1+\frac{\sqrt{G_{R\beta_t,\lambda_t}(0,0)}}{s}} \frac{ \E\cro{\exp\pare{\frac{\gamma}{2}\beta_t^{d/q-\alpha}N_{2p,R\beta_t}^2(Z+s)}}
^{1/(1+\epsilon)}}{\P(N_{2p,R\beta_t}(Z+s)\geq \sqrt{2t\epsilon}\beta_t^{-d/(2q)})} 
\\
& \hspace*{2cm} 
\int_0^{+\infty} \theta t\beta_t^{-\alpha} e^{-t\beta_t^{-\alpha} y (\frac{\gamma}{1+\epsilon}-\theta)} dy.
\end{align*}

Note that the integral is finite if and only if $\theta<\frac{\gamma}{1+\epsilon}$. In this case, 
\begin{align*}
& \mathbb{E}\cro{\exp\pare{\theta\beta_t^{d/q-\alpha}N_{p,R\beta_t}(l_{R\beta_t,\tau})}}
\\
&\leq 1+C(\epsilon,\theta,\gamma)\pare{1+\frac{\sqrt{G_{R\beta_t,\lambda_t}(0,0)}}{s}}
\frac{ \E\cro{\exp\pare{\frac{\gamma}{2}\beta_t^{d/q-\alpha}N_{2p,R\beta_t}^2(Z+s)}}
^{1/(1+\epsilon)}}{\P(N_{2p,R\beta_t}(Z+s)\geq \sqrt{2t\epsilon}\beta_t^{-d/(2q)})}  \, .
\end{align*}

Choosing $s=\frac{ \epsilon\sqrt{2t}}{R^\frac{d}{2p}\beta_t^{d/2}} $, using triangle inequality and the fact that $N_{2p,R\beta_t}(s)=s(R\beta_t)^{\frac{d}{2p}}$,  we have:
\begin{align}
\nonumber
& \mathbb{E}\cro{\exp\pare{\theta\beta_t^{d/q-\alpha}N_{p,R\beta_t}(l_{R\beta_t,\tau})}}
\\
\label{UB2}
& \leq 1+C(\epsilon,\theta,\gamma)\pare{1+\frac{R^{d/(2p)}\sqrt{\beta_t^d G_{R\beta_t,\lambda_t}(0,0)}}{\epsilon\sqrt{2t}}}
e^{\gamma t\epsilon \beta_t^{-\alpha}}
\frac{ \E\cro{\exp\pare{\frac{\gamma (1+\epsilon)}{2}\beta_t^{d/q-\alpha}N_{2p,R\beta_t}^2(Z)}}
^{1/(1+\epsilon)}}{\P\pare{N_{2p,R\beta_t}(Z)\geq \sqrt{2t\epsilon}\beta_t^{-d/(2q)}(1+\sqrt{\epsilon})}} 
\end{align}

Combining \refeq{UB1} and \refeq{UB2}, we have thus proved that for all $\theta > 0, \epsilon > 0, \gamma > 0$ such
that  $\theta<\frac{\gamma}{1+\epsilon}$,
\begin{align}
\nonumber
& \E \cro{\exp\pare{\theta\beta_t^{d/q-\alpha}N_{p}(l_t)}}  e^{-at\beta_t^{-\alpha}} 
\\
 \label{UB3}
& \le
\cro{1+C(\epsilon,\theta,\gamma)\pare{1+\frac{R^{d/(2p)}\sqrt{\beta_t^d G_{R\beta_t,\lambda_t}(0,0)}}{\epsilon\sqrt{2t}}}
e^{\gamma t\epsilon \beta_t^{-\alpha}}
\frac{ \E\cro{\exp\pare{\frac{\gamma (1+\epsilon)}{2}\beta_t^{d/q-\alpha}N_{2p,R\beta_t}^2(Z)}}
^{1/(1+\epsilon)}}{\P\pare{N_{2p,R\beta_t}(Z)\geq \sqrt{2t\epsilon}\beta_t^{-d/(2q)}(1+\sqrt{\epsilon})}} }
.
\end{align}

We now use concentration inequalities for norms of Gaussian processes.

\begin{lemma} Large deviations for $N_{2p,R}(Z)$. \\
\label{Z}
Let $\tau$ and $(Z_x, x\in \T_{R\beta_t})$ be defined
as in Theorem \ref{Eisenbaum}, and
$\rho(a,R,t)$ be defined by \refeq{rhoart}.
Under assumptions of  theorem \ref{OKCAVA3},
\begin{enumerate}
\item For all $a>0$, $R>0$, $t> 0$, $\rho(a,R,t) \le a R^{d/q} \beta_t^{d/q-\alpha}$. 
\item $\forall a,\epsilon,R,t>0$, 
\[ \P\cro{N_{2p,R\beta_t} (Z)\geq \sqrt{t \epsilon}\beta_t^{-d/(2q)}}
\geq
 \frac{\beta_t^{d/(2q)}}{\sqrt{2 \pi t \epsilon \rho(a,R,t)}} 
\pare{1-\frac{\beta_t^{d/q}}{t \epsilon\rho(a,R,t)}}_+ 
\exp\pare{- \frac{1}{2}t\beta_t^{-d/q }\epsilon \rho(a,R, t)} .
\]
\item  $\forall a,\gamma,\epsilon,R,t>0$ such that $\gamma(1+\epsilon)^2<\rho(a,R,t)\beta_t^{\alpha-d/q}$
\[
\E\cro{\exp\pare{\frac{\gamma (1+\epsilon)}{2}\beta_t^{d/q-\alpha}N_{2p,R\beta_t}^2(Z)}}
\leq \frac{2}{\sqrt{1-\frac{\gamma(1+\epsilon)^2\beta_t^{d/q-\alpha}}{\rho(a,R,t)}}} 
  .
\]
\end{enumerate} 
\end{lemma}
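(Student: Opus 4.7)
Part (1) is a direct test-function computation. Substituting $h(x) \equiv (R\beta_t)^{-d/(2p)}$ into \refeq{rhoart} one checks that $N_{2p,R\beta_t}(h)=1$, $N_{2,R\beta_t}^2(h) = (R\beta_t)^{d/q}$, and $A_{R\beta_t}h \equiv 0$ since constants are harmonic. Hence $\rho(a,R,t) \le a\beta_t^{-\alpha}(R\beta_t)^{d/q} = aR^{d/q}\beta_t^{d/q-\alpha}$.

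For Part (2), the plan is to test against a single well-chosen one-dimensional Gaussian projection of $Z$ and apply the standard Gaussian lower tail. For any $f:\T_{R\beta_t}\to\R$ with $N_{(2p)',R\beta_t}(f)=1$, H\"older's inequality gives $\va{\bra{f,Z}_{R\beta_t}} \le N_{2p,R\beta_t}(Z)$, and $\bra{f,Z}_{R\beta_t}$ is centered Gaussian with variance $\bra{f,G_{R\beta_t,\lambda_t}f}_{R\beta_t}$, so
\[
\P\cro{N_{2p,R\beta_t}(Z)\ge u} \;\ge\; \P\cro{\mathcal{N}(0,\bra{f,Gf}_{R\beta_t}) \ge u}.
\]
The key identity is that $\sigma^2 := \sup_{N_{(2p)'}(f)=1} \bra{f,Gf}_{R\beta_t}$ equals $1/\rho(a,R,t)$. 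Writing the variance as $\nor{G^{1/2}f}_2^2$ one gets $\sigma^2=\nor{G^{1/2}}_{\ell^{(2p)'}\to\ell^2}^2$; operator-norm duality applied to the symmetric $G^{1/2}$ shows this equals $\nor{G^{1/2}}_{\ell^2\to\ell^{2p}}^2$; and the substitution $h=G^{1/2}u$ inside \refeq{rhoart} identifies the latter with $1/\rho(a,R,t)$. The stated bound then follows from the classical Gaussian lower tail $\P(\mathcal{N}(0,\sigma^2)\ge u)\ge \frac{\sigma}{u\sqrt{2\pi}}(1-\sigma^2/u^2)_+ e^{-u^2/(2\sigma^2)}$ applied at $u=\sqrt{t\epsilon}\,\beta_t^{-d/(2q)}$.

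For Part (3), set $F:=N_{2p,R\beta_t}(Z) = \sup\acc{\bra{f,Z}_{R\beta_t}: N_{(2p)',R\beta_t}(f)\le 1}$, a Gaussian seminorm whose weak variance, by Part (2), is again $\sigma^2=1/\rho(a,R,t)$. The Gaussian concentration (Borell) inequality gives the subgaussian bound $\E\cro{\exp(\lambda(F-m))}\le \exp(\lambda^2/(2\rho))$ around the median $m$. Combine this with the elementary split $F^2\le (1+\epsilon)(F-m)^2 + (1+1/\epsilon)m^2$ and the Fubini trick: introducing an independent standard normal $W$,
\[
\E\cro{\exp\pare{c'(F-m)^2}} = \E_W\cro{\E\cro{\exp\pare{\sqrt{2c'}W(F-m)}}} \le \E_W\cro{\exp\pare{c'W^2/\rho}} = \pare{1-2c'/\rho}^{-1/2}
\]
whenever $2c'/\rho<1$. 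Choosing $c=\tfrac{\gamma(1+\epsilon)}{2}\beta_t^{d/q-\alpha}$ and $c'=c(1+\epsilon)$ produces exactly the radicand $1-\gamma(1+\epsilon)^2\beta_t^{d/q-\alpha}/\rho(a,R,t)$ of the statement. The remaining prefactor $\exp(c(1+1/\epsilon)m^2)$ is then controlled by the numerical constant $2$ using the estimate of $G_{R\beta_t,\lambda_t}(0,0)$ provided by Lemma \ref{G}, which together with the scaling hypotheses on $\beta_t$ from Theorem \ref{OKCAVA3} guarantees that $cm^2 = O(1)$.

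The main obstacle is Part (3). Part (1) is a one-line test, and in Part (2) the identification $\sigma^2=1/\rho$ is standard operator-norm duality. In Part (3), however, one must choose the $\epsilon$-slack carefully so that the Borell square-exponential integrability produces precisely the stated form, and simultaneously invoke Lemma \ref{G} and the scaling conditions on $\beta_t$ to absorb the median contribution into the constant $2$.
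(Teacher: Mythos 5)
Parts (1) and (2) are correct. Part (1) is the same one-line test-function computation as the paper. For part (2), your overall structure (project $Z$ on a single direction, use the Gaussian lower tail, optimize over the test function) agrees with the paper, but your identification $\sigma^2=1/\rho(a,R,t)$ via operator-norm duality for $G^{1/2}$ and the substitution $h=G^{1/2}u$ is a cleaner and more transparent argument than the paper's, which establishes $\rho_1 = 1/\rho$ by a two-sided inequality, using H\"older together with $G^{-1}=\lambda_t-A_{R\beta_t}$ in one direction and Lagrange multipliers for the extremizer in the other. Your route is genuinely better here and avoids the appeal to the Euler--Lagrange equation.

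Part (3) has a genuine gap. Your claim that the scaling hypotheses on $\beta_t$ together with Lemma \ref{G} guarantee $c\,m^2=O(1)$ is false in general. What Lemma \ref{G} gives is $M^2 \le C (R\beta_t)^{d/p} G_{R\beta_t,\lambda_t}(0,0)$, and with $c \asymp \beta_t^{d/q-\alpha}$ one finds $c\,M^2 = O(1)$ only when $d<\alpha$; when $d=\alpha$ one has $c\,M^2 = O(\log\beta_t)$, and when $\alpha<d<\alpha q$ (which is allowed in the theorem for $p$ close to $1$) one has $c\,M^2 = O(\beta_t^{d-\alpha})\to\infty$. The conditions on $\beta_t$ only ensure $c\,M^2=o(t\beta_t^{-\alpha})$, which is precisely the estimate the paper derives and which suffices because the median factor is later absorbed after taking $\tfrac{\beta_t^\alpha}{t}\log$. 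So the median term cannot be bundled into a numerical constant $2$. In the paper the factor $2$ has a different origin: it comes from the two-sidedness of the concentration bound $\P(\va{N_{2p,R\beta_t}(Z)-M}\ge u)\le 2\,\P(Y\ge u\sqrt{\rho})$, after which the authors integrate the tail directly to get $2\,\E[\exp(cY^2/\rho)]-1 \le 2(1-2c/\rho)^{-1/2}$. A secondary issue with your linearization step: Borell's inequality does not give $\E[\exp(\lambda(F-m))]\le \exp(\lambda^2/(2\rho))$ outright (median-centering is not mean-centering, and integrating the one-sided subgaussian tail produces an extra factor growing with $\va{\lambda}$), so the Fubini trick yields an uncontrolled constant rather than the clean radicand. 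To repair part (3), replace the Fubini/linearization step by the direct tail-integration used in the paper, and replace the claim $c\,m^2=O(1)$ by $c\,m^2=o(t\beta_t^{-\alpha})$; the statement one actually needs and gets is the logarithmic-asymptotic bound $\limsup_t \tfrac{\beta_t^\alpha}{t}\log \E[\exp(\tfrac{\gamma(1+\epsilon)}{2}\beta_t^{d/q-\alpha}N_{2p,R\beta_t}^2(Z))]\le 0$, not the literal non-asymptotic inequality with constant $2$.
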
 

\begin{proof}
\begin{enumerate}
\item It suffices to 
take $f=(R\beta_t)^{-d/2p}$ in \refeq{rhoart} to obtain the result.
\item By H\"older's inequality, for any $f$ such that $N_{(2p)',R\beta_t}(f)=1$,
\[
\P\cro{N_{2p,R\beta_t} (Z)\geq \sqrt{t \epsilon}\beta_t^{-d/(2q)}}
 \geq \P \cro{\sum_{x \in \T_{R\beta_t}} f_x Z_x \geq \sqrt{ t  \epsilon}\beta_t^{-d/(2q)} }
\, .
\]
Since $\sum\limits_{x \in \T_{R\beta_t}} f_x Z_x$ is a real centered Gaussian variable with
variance 
\[ \sigma^2_{a,R,t}(f)= 
\sum_{x,y  \in \T_{R\beta_t}} G_{R\beta_t,\lambda_t}(x,y) f_x f_y \, ,
\]
we have:
\begin{align*} 
\P\cro{\nor{Z}_{2p,R\beta_t} \geq  \sqrt{t \epsilon}\beta_t^{-d/(2q)}}
& \geq 
\frac{\sigma_{a,R,t}(f)\beta_t^{d/(2q)}}{\sqrt{2\pi t \epsilon}} 
\pare{1 - \frac{\sigma^2_{a,R,t}(f)\beta_t^{d/q}}{t \epsilon}}_+ 
\exp\pare{- \frac{t\beta_t^{-d/q} \epsilon }{2 \sigma^2_{a,R,t}(f)}} 
 \\
& \geq 
\frac{\sigma_{a,R,t}(f)\beta_t^{d/(2q)}}{\sqrt{2\pi t\epsilon}} 
\pare{1 - \frac{\rho_1(a,R,t)\beta_t^{d/q}}{t\epsilon}}_+ 
\exp\pare{- \frac{t\beta_t^{-d/q}\epsilon }{2 \sigma^2_{a,R,t}(f)}},
\end{align*}
where $\rho_1(a,R,t)=\sup\acc{\sigma^2_{a,R,t}(f), N_{(2p)',R\beta_t}(f)=1}$.
Taking the supremum over $f$ we obtain that $\forall a,R,t,\epsilon >0$,
\begin{align*}
&\P\cro{N_{2p,R\beta_t} (Z)\geq \sqrt{t \epsilon}\beta_t^{-d/(2q)}}\\
\geq &
 \frac{\sqrt{\rho_1(a,R, t)}\beta_t^{d/(2q)}}{\sqrt{2 \pi t\epsilon }} 
\pare{1-\frac{\rho_1(a,R,t)\beta_t^{d/q}}{t\epsilon}}_+ 
\exp\pare{- \frac{t\beta_t^{-d/q} \epsilon }{2 \rho_1(a,R, t)}}. 
\end{align*}
Then it suffices to prove that $\rho_1(a,R,t)=\frac{1}{\rho(a,R,t)}$ to have the result.

On one hand, by H\"older inequality, 
$$<f,G_{R\beta_t,\lambda_t}f>_{R\beta_t}\leq N_{2p,R\beta_t}(G_{R\beta_t,\lambda_t}f),\ \forall f \text{ such that }N_{(2p)',R\beta_t}(f)=1.$$ 
Since $G_{R\beta_t,\lambda_t}^{-1}=\lambda_t -A_{R\beta_t}$,
\begin{align*}
<f,G_{R\beta_t,\lambda_t}f>_{R\beta_t}&=<G_{R\beta_t,\lambda_t}^{-1}G_{R\beta_t,\lambda_t}f,G_{R\beta_t,\lambda_t}f>_{R\beta_t}\\
&= \lambda_tN_{2,R\beta_t}^2(G_{R\beta_t,\lambda_t}f) - <A_{R\beta_t}G_{R\beta_t,\lambda_t}f,G_{R\beta_t,\lambda_t}f>\\
&\geq \rho(a,R,t)N_{2p,R\beta_t}^2(G_{R\beta_t,\lambda_t}f).
\end{align*}
Therefore, for all $f$ such that $N_{(2p)',R\beta_t}(f)=1$, $<f,G_{R\beta_t,\lambda_t}f>_{R\beta_t}^2\leq \frac{<f,G_{R\beta_t,\lambda_t}f>_{R\beta_t}}{\rho(a,R,t)}$. Then, taking the supremum over $f$, $\rho_1(a,R,t)\leq 1/\rho(a,R,t)$. 

On the other hand, let $f_0$ achieving the infimum in the definition of $\rho(a,R,t)$. 
\begin{align*}
\rho_1(a,R,t)&= \sup_{N_{(2p)',R\beta_t}(f)=1} \acc{<f,G_{R\beta_t,\lambda_t}f>_{R\beta_t}}\\
&\geq \frac{<G_{R\beta_t,\lambda_t}^{-1}f_0,f_0>_{R\beta_t}}{N_{(2p)',R\beta_t}^2(G_{R\beta_t,\lambda_t}^{-1}f_0)}
= \frac{\rho(a,R,t)}{N_{(2p)',R\beta_t}^2(G_{R\beta_t,\lambda_t}^{-1}f_0)}.
\end{align*}
Furthermore, using the Lagrange multipliers method, we know that\\
 $N_{(2p)',R\beta_t}(G_{R\beta_t,\lambda_t}^{-1}f_0)=\rho(a,R,t)$. Hence $\rho_1(a,R,t)\geq 1/\rho(a,R,t)$, and then
$\rho_1(a,R,t)= 1/\rho(a,R,t)$. 

\item
Let $M$ be a median of $N_{2p,R\beta_t}(Z)$. We can easily see that
\begin{align}
\nonumber
&\E\cro{\exp\pare{\frac{\gamma(1+\epsilon)}{2} \beta_t^{d/q-\alpha}N_{2p,R\beta_t}^2(Z) }}
\\
\label{UB4}
\leq & \E\cro{\exp\pare{\frac{\gamma(1+\epsilon)^2}{2} \beta_t^{d/q-\alpha} \vert   N_{2p,R\beta_t}-M\vert^2}} \exp\pare{\frac{\gamma(1+\epsilon)^2}{2\epsilon} \beta_t^{d/q-\alpha}M^2}
\end{align}

Let us now prove that under our assumptions we have $ \beta_t^{d/q-\alpha}M^2=o( t\beta_t^{-\alpha})$ which is equivalent to $M^2=o( t\beta_t^{-d/q})$.
Since $M=\pare{\text{median}\pare{\sum\limits_{x\in\T_{R\beta_t}} Z_x^{2p}}}^{1/2p}$ and that for $X\geq 0,\ \text{ median}(X)\leq 2 \E[X]$, we get:
\begin{align*}
M^2 &=\pare{\text{median}\pare{\sum_{x\in\T_{R\beta_t}} Z_x^{2p}}}^{1/p}
\leq \pare{2\E\cro{\sum_{x\in\T_{R\beta_t}} Z_x^{2p}}}^{1/p}
\\
& \le 2^{1/p} \pare{\sum_{x\in\T_{R\beta_t}} G_{R\beta_t,\lambda_t}(0,0)^p \E\cro{Y^{2p}}}^{1/p},\text{ where } Y\sim\mathcal{N}(0,1)\\
&\leq C(p)(R\beta_t)^{d/p}G_{R\beta_t,\lambda_t}(0,0).
\end{align*}
The asymptotic behavior of $G_{R\beta_t,\lambda_t}(0,0)$ is given by the following lemma whose proof
is postponed in section \ref{limrho}. 
\begin{lemma}{Behavior of $G_{R\alpha_t,\lambda_t}(0,0)$}.\\
\label{G}
Assume {\bf (H4)}, and that $\lambda_t=a\beta_t^{-\alpha}$ and $\beta_t\gg 1$. Then for any $a,R>0$,
\begin{enumerate}
\item for $d<\alpha$, $G_{R\beta_t,\lambda_t}(0,0)=O(\beta_t^{\alpha-d})$.
\item for $d=\alpha$, $G_{R\beta_t,\lambda_t}(0,0)=O(\log\beta_t)$.
\item for $d>\alpha$, $G_{R\beta_t,\lambda_t}(0,0)=O(1)$.
\end{enumerate}
\end{lemma}
Hence, for $d<\alpha$, $M^2 = O (\beta_t^{\alpha-d/q})=o( t\beta_t^{-d/q})$ as soon as $\beta_t^\alpha \ll t $. For $d=\alpha$, $M^2=O(\beta_t^{d/p}\log\beta_t)=o( t\beta_t^{-d/q})$ as soon as $\beta_t^d\ll \frac{t}{\log(t)}$. 
For $d>\alpha$, $M^2 =O( \beta_t^{d/p})=o (t\beta_t^{-d/q})$ as soon as
$\beta_t^d\ll t$.

Let us now work on the expectation in \refeq{UB4}.
\begin{align*}
&\E\cro{\exp\pare{\frac{\gamma(1+\epsilon)^2}{2} \beta_t^{d/q-\alpha} \vert   N_{2p,R\beta_t}-M\vert^2}}\\
= & 1+ \int_0^{+\infty} \frac{\gamma(1+\epsilon)^2}{2}\beta_t^{d/q-\alpha}\exp\pare{\frac{\gamma(1+\epsilon)^2}{2}\beta_t^{d/q-\alpha}y}\P(\vert  N_{2p,R\beta_t}(Z)-M\vert^2>y )dy 
\end{align*}

Using concentration inequalities for norms of Gaussian processes (see for instance lemma 3.1 in \cite{LT91}), for all $y > 0$, 

\[ \P\pare{\va{N_{2p,R\beta_t} (Z)- M} \geq \sqrt{y}} 
\leq 2 \P\pare{ Y \geq \sqrt{y\rho(a, R, t) }} \, , \mbox{ where } Y\sim \mathcal{N}(0,1) \, .
\] 
 Hence,
\begin{align*}
&\E\cro{\exp\pare{\frac{\gamma(1+\epsilon)^2}{2} \beta_t^{d/q-\alpha} \vert   N_{2p,R\beta_t}-M\vert^2}}\\
\leq &1+ 2\int_0^{+\infty} \gamma(1+\epsilon)^2\beta_t^{d/q-\alpha}\exp\pare{\frac{\gamma(1+\epsilon)^2}{2}\beta_t^{d/q-\alpha}y} \P\pare{Y\geq \sqrt{y\rho(a,R,t)}}dy
\\
= & 2 \E \cro{\exp \pare{  \frac{\gamma (1+\epsilon)^2}{2 \rho(a,R,t)} \beta_t^{d/q - \alpha} Y^2}} -1
\\
\leq &\frac{2}{\sqrt{1-\frac{\gamma(1+\epsilon)^2\beta_t^{d/q-\alpha}}{\rho(a,R,t)}}}   , 
\text{ if } \gamma  (1+\epsilon)^2 <\rho(a,R,t) \beta_t^{\alpha-d/q}.
\end{align*}

\end{enumerate}
\end{proof}

We now end the proof of the upper bound in \refeq{loglapNp}. To begin with, we state the following
lemma about the asymptotic behavior of $\rho(a,R,t)$. Its proof will be given in Section \ref{limrho}.
\begin{prop}
\label{OKCAVA4}
For positive real numbers $a, R$, let us define
\begin{equation}
\label{rho(a,R)}
\rho(a,R)=\inf_{\underset{R-periodic}{g:\R^d\rightarrow\R}}\acc{ a\nor{g}_{2,R}^2 +R^d\sum_{z\in\Z^d}\va{\F_R(g)\pare{z} }^2\va{\frac{z}{R}}^\alpha  ,\nor{g}_{2p,R}=1} .
\end{equation}
\begin{equation}
\label{rhoa}
\rho(a)=\inf_{g:\R^d\rightarrow \R}\acc{ a\nor{g}_2^2+ \int\limits_{\R^d}\vert \omega\vert^\alpha \vert\F(g)(\omega)\vert^2, \nor{g}_{2p}=1} .
\end{equation} 
Then, under assumptions of Theorem \ref{OKCAVA3}, 
\[ \liminf_{t\rightarrow +\infty}\beta_t^{\alpha-d/q} \rho(a,R,t) \ge \rho(a,R) \, , \,\, \text{ and } 
 \liminf_{R\rightarrow +\infty} \rho(a,R) \geq \rho(a) .
 \]
 Moreover, $\rho(a)= a^{1-d/(\alpha q)} \rho_{\alpha, d, p}$, where $\rho_{\alpha, d, p}$ is defined by \refeq{rho}. 
\end{prop}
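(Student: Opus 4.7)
The proposition contains three statements; I address them in the order given.

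\textbf{From the discrete torus to the continuous torus.} This is where the Fourier interpolation announced in the introduction replaces the linear interpolation of \cite{ClemEJP}. Let $h_t:\T_{R\beta_t}\to\R$ be a near-minimizer of $\rho(a,R,t)$ with $N_{2p,R\beta_t}(h_t)=1$. Identify $\T_{R\beta_t}$ with a centered symmetric subset $\Lambda_t\subset\Z^d$ and define the $R$-periodic interpolant $g_t:\R^d\to\R$ by prescribing $\F_R(g_t)(n):=\beta_t^{d/(2p)}(R\beta_t)^{-d}F_{R\beta_t}(h_t)(n)$ for $n\in\Lambda_t$ and zero elsewhere. The inversion formula on $\T_{R\beta_t}$ immediately gives $g_t(k/\beta_t)=\beta_t^{d/(2p)}h_t(k)$; Parseval (Propositions~\ref{inversion finie},~\ref{inversion périodique}) yields $\|g_t\|_{2,R}^2=\beta_t^{-d/q}N_{2,R\beta_t}^2(h_t)$; and Fourier-diagonalization of $-A_{R\beta_t}$ via its symbol $\psi(\omega):=\sum_y\mu(y)(1-\cos(2\pi\langle y,\omega\rangle))$ gives $-\langle h_t,A_{R\beta_t}h_t\rangle_{R\beta_t}=(R\beta_t)^{-d}\sum_n\psi(n/(R\beta_t))|F_{R\beta_t}(h_t)(n)|^2$. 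Assumption (H2) ensures $\beta_t^\alpha\psi(\omega/\beta_t)\to|\omega|^\alpha$ pointwise while (H4) provides a uniform upper bound $\beta_t^\alpha\psi(\omega/\beta_t)\le C(|\omega|^\alpha\wedge\beta_t^\alpha)$; dominated convergence then gives $R^d\sum_z|z/R|^\alpha|\F_R(g_t)(z)|^2\le\beta_t^{\alpha-d/q}(-\langle h_t,A_{R\beta_t}h_t\rangle_{R\beta_t})(1+o(1))$.

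\textbf{The main obstacle} is to certify $\|g_t\|_{2p,R}\ge 1-o(1)$ so that $g_t$ is asymptotically admissible for $\rho(a,R)$. The sampling relation $\beta_t^{-d}\sum_k|g_t(k/\beta_t)|^{2p}=N_{2p,R\beta_t}^{2p}(h_t)=1$ holds exactly, but the Riemann-sum approximation $\beta_t^{-d}\sum_k|g_t(k/\beta_t)|^{2p}=\|g_t\|_{2p,R}^{2p}+o(1)$ is nontrivial because $g_t$ is a trigonometric polynomial of degree up to $R\beta_t/2$. The resolution uses an a priori concentration estimate: the elementary bound $\beta_t^{\alpha-d/q}\rho(a,R,t)\le aR^{d/q}$ (item~1 of Lemma~\ref{Z}) combined with the coercivity of $\psi$ forces, up to a tail of size $\epsilon$, the Fourier mass of $h_t$ to concentrate on the $t$-independent low-frequency set $\{n:|n/R|\le M(a,R,\epsilon)\}$; then $g_t$ is effectively bandlimited far below the Nyquist frequency $\beta_t/2$, and a Marcinkiewicz--Zygmund-type quadrature bound delivers the desired sampling approximation. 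Combining gives $\beta_t^{\alpha-d/q}\rho(a,R,t)\ge\rho(a,R)-o(1)$.

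\textbf{From the continuous torus to $\R^d$.} Fix a near-minimizer $g_R$ of $\rho(a,R)$ with $\|g_R\|_{2p,R}=1$ and a smooth cutoff $\phi\in C_c^\infty(\R^d)$ supported in $[0,1]^d$ approximating $\mathbf{1}_{[0,1]^d}$ at scale $\delta$, so $\|\phi\|_{2p}^{2p}=1-O(\delta)$ and $\|\phi\|_2^2=1-O(\delta)$. For $\tau\in[0,R]^d$, set $\tilde g_R^\tau(x):=\phi(x/R)g_R(x-\tau)$. Fubini averaging gives $R^{-d}\int_\tau\|\tilde g_R^\tau\|_{2p}^{2p}d\tau=\|\phi\|_{2p}^{2p}$, and the identity $\F(\tilde g_R^\tau)(\omega)=\sum_z e^{-2i\pi\langle z,\tau\rangle/R}\F_R(g_R)(z)R^d\F(\phi)(R(\omega-z/R))$ combined with the orthogonality of the phases $e^{-2i\pi\langle z,\tau\rangle/R}$ on $[0,R]^d$ yields $R^{-d}\int_\tau\int|\omega|^\alpha|\F(\tilde g_R^\tau)|^2d\omega d\tau=\|\phi\|_2^2 R^d\sum_z|z/R|^\alpha|\F_R(g_R)(z)|^2(1+o_R(1))$, with the $o_R(1)$ coming from the negligible variation of $|\omega|^\alpha$ on the scale-$1/R$ clusters where $R^d\F(\phi)(R\cdot)$ is localized. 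A Chebyshev-type joint selection of $\tau^\star$ produces an admissible test function for $\rho(a)$ whose continuous functional is at most $\rho(a,R)[g_R](1+o(1))$; letting $R\to\infty$ then $\delta\to 0$ gives $\liminf_R\rho(a,R)\ge\rho(a)$.

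\textbf{Closed form for $\rho(a)$.} For any $g:\R^d\to\R$ with $\|g\|_{2p}=1$, the rescaling $g_\lambda(x):=\lambda^{d/(2p)}g(\lambda x)$ preserves the constraint and transforms the functional of \refeq{rhoa} into $a\lambda^{-d/q}\|g\|_2^2+\lambda^{\alpha-d/q}\int|\omega|^\alpha|\F(g)(\omega)|^2 d\omega$. Optimizing in $\lambda>0$ at the critical value $\lambda^\alpha=a(d/q)\|g\|_2^2/((\alpha-d/q)\int|\omega|^\alpha|\F(g)|^2)$ and matching the prefactor $\frac{\alpha q}{\alpha q-d}\bigl(\frac{\alpha q-d}{d}\bigr)^{d/(\alpha q)}$ yields, after taking the infimum over $g$ with $\|g\|_{2p}=1$, the identity $\rho(a)=a^{1-d/(\alpha q)}\rho_{\alpha,d,p}$.
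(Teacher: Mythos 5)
Your closed-form computation of $\rho(a)=a^{1-d/(\alpha q)}\rho_{\alpha,d,p}$ is correct and is the same rescaling argument as the paper's. But the two hard estimates both have real gaps.

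For $\liminf_t\beta_t^{\alpha-d/q}\rho(a,R,t)\ge\rho(a,R)$, you set $\F_R(g_t)(n)\propto F_{R\beta_t}(h_t)(n)$ directly (zero-padded trigonometric interpolation). The Dirichlet-form comparison then reads, pointwise in $n$, $|n/R|^\alpha$ versus $\beta_t^\alpha\big(1-F(\mu)(n/(R\beta_t))\big)$. The ratio tends to $1$ only when $|n/(R\beta_t)|\to0$; on the remaining part of the torus it is merely bounded above and below (by (H2)–(H3), not (H4)). So "dominated convergence" does not give $(1+o(1))$: the spectral measure $|F_{R\beta_t}(h_t)(n)|^2$ moves with $t$, and the symbol weights are largest precisely in the region where the ratio is bounded but not close to $1$. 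You would obtain $R^d\sum|z/R|^\alpha|\F_R(g_t)(z)|^2\le C\,\beta_t^{\alpha-d/q}(-\langle h_t,A_{R\beta_t}h_t\rangle)$ with an uncontrolled constant $C$, which is not enough. Your a priori concentration estimate does not repair this: it only gives an $L^2$-tail bound (mass above frequency $M$ is an $O(M^{-\alpha})$ fraction of the total $\ell^2$ mass), but those high modes carry symbol weight $\sim\beta_t^\alpha$, so a small $L^2$ fraction may still carry a non-negligible fraction of the Dirichlet energy, and nothing forces that fraction to vanish as $M\to\infty$ uniformly in $t$. The paper removes the problem at the source by tilting the Fourier coefficients with $\F_{R\beta_t}(\varphi)(n)=(R\beta_t)^{-d}\sqrt{1-F(\mu)(n/(R\beta_t))}\,|n/(R\beta_t)|^{-\alpha/2}$, which makes the gradient parts match \emph{exactly} (equation \refeq{grad}); the price is then paid in the $2$-norm and $2p$-norm, for which the paper runs explicit $T_1/T_2$ low/high-frequency splits (using (H2), (H3), Young, and the convergence of $\sum_{|n|>R\beta_t\delta_t}|n|^{-\alpha q}$, where $d<\alpha q$ enters) together with a Poincar\'e inequality on unit cubes. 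The same issue undermines your $2p$-norm step: the sampling identity is exact, but being $\epsilon$-close in $L^2$ to a band-limited function does not give $L^{2p}$-closeness, so "effectively bandlimited plus Marcinkiewicz--Zygmund" is not yet a proof; one needs quantitative $\ell^{(2p)'}$-Fourier control, which is exactly what the paper's $T_1,T_2$ estimates and the $\bar f$, $f(\lfloor\cdot\rfloor)$ comparison supply.

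For $\liminf_R\rho(a,R)\ge\rho(a)$ your Fourier-side route (phase averaging over translations, smooth cutoff, cluster localization of $R^d\F(\phi)(R\cdot)$) is genuinely different from the paper's real-space argument, which rewrites both periodic and non-periodic Dirichlet forms via the Gagliardo seminorm $c_{\alpha,d}\iint (g(x)-g(y))^2/|x-y|^{d+\alpha}$ (equations \refeq{def.grad}, \refeq{def.gradper}), uses a translated piecewise-linear cutoff $\psi_R$ with $\|\nabla\psi_R\|_\infty\lesssim R^{-1/2}$, and a simple averaging over translations to control the boundary-layer $L^{2p}$ mass (equation \refeq{cont.bord}). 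Your route is plausible and could perhaps be made rigorous, but the claimed orthogonality identity has $o_R(1)$ errors coming from the overlaps of the clusters $R^d\F(\phi)(R(\omega-z/R))$ and from the variation of $|\omega|^\alpha$ around $z=0$ (where $|\omega|^\alpha$ is not smooth for $\alpha<2$), and the joint Chebyshev selection of $\tau^\star$ across three functionals requires a weighted Markov argument; none of this is carried out.
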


Let us fix $\theta, a, R, \gamma, \epsilon$ such that   
$\theta(1+ \epsilon) < \gamma < (1+\epsilon)^{-2} \rho(a,R) \le \liminf_{t \rightarrow +\infty}\beta_t^{\alpha-d/q} \rho(a,R,t)$.
By lemma \ref{G}, 
\[ \limsup_{t \rightarrow +\infty} \frac{t}{\beta_t^{\alpha}} \log \pare{\frac{\beta_t^d G_{R\beta_t,\lambda_t}(0,0)}{t}}\le 0
\, .
\]
%
By 3 of lemma \ref{Z},
\begin{equation}
\limsup_{t\rightarrow+\infty}\frac{\beta_t^\alpha}{t}\log \E\cro{\exp\pare{\frac{\gamma (1+\epsilon)}{2}\beta_t^{d/q-\alpha}N_{2p,R\beta_t}^2(Z)}}\leq 0,
\end{equation}

and by 1 and 2 of lemma \ref{Z}, 
\begin{equation}
\liminf_{t\rightarrow+\infty}\frac{\beta_t^\alpha}{t}\log \P\pare{N_{2p,R\beta_t}(Z)\geq \sqrt{2t\epsilon}\beta_t^{-d/(2q)}(1+\sqrt{\epsilon})}\geq -a\epsilon(1+\sqrt{\epsilon})^2R^{d/q}.
\end{equation}

Therefore, it follows from \refeq{UB3} that  for $\theta(1+\epsilon)<\gamma <(1+\epsilon)^{-2} \rho(a,R)$,
\begin{equation}
\limsup_{t\rightarrow+\infty}\frac{\beta_t^\alpha}{t}\log  \E\cro{\exp\pare{\theta\beta_t^{d/q-\alpha}N_p(l_t)}} \leq a +\epsilon\gamma +a\epsilon (1+\sqrt{\epsilon})^2R^{d/q} .
\end{equation}

Letting  $\epsilon$ go to  $0$, we obtain that  for $\theta<\gamma <\rho(a,R)$ 
\begin{equation}
\limsup_{t\rightarrow+\infty}  \frac{\beta_t^\alpha}{t}\log\E\cro{\exp\pare{\theta\beta_t^{d/q-\alpha}N_p(l_t)}} \leq a .
\end{equation}
Letting $R$ go to $+\infty$, the same inequality is true for $\theta < \rho(a)$. Thus, for all $\theta > 0$, 
\begin{align*}
   \limsup_{t\rightarrow+\infty}  \frac{\beta_t^\alpha}{t}\log\E\cro{\exp\pare{\theta\beta_t^{d/q-\alpha}N_p(l_t)}} 
& \leq \inf\acc{a,\ \rho(a)>\theta} 
\\
& = \inf\acc{a > 0, a^{1-\frac{d}{\alpha q}} \rho_{\alpha,d,p} > \theta} = 
\pare{\frac{\theta}{\rho_{\alpha,d,p}}}^\frac{\alpha q}{\alpha q - d} \, 
\end{align*}
which is the desired result.

\section{Asymptotic behavior of $\rho(a,R,t)$ and $G_{R\beta_t,\lambda_t}(0,0)$.}
\label{limrho}
The aim of this section is to prove  Proposition \ref{OKCAVA4} and Lemma \ref{G}. 

\subsection{Proof of Lemma \ref{G}}.
Let $p_s$ and $p_s^{R\beta_t}$ the transition probabilities of $X_s$ and $X_s^{R\beta_t}$.
\begin{align*}
G_{R\beta_t,\lambda_t}(0,0)	&=\int_0^{+\infty} \exp(-\lambda_t s)p_s^{R\beta_t}(0,0)ds\\
&\leq 1+\int_1^{+\infty}\exp(-\lambda_t s)\sum_{z\in\Z^d} p_s(0,zR\beta_t)ds.
\end{align*}
By {\bf (H4)}, there exists $C$ such that
$$\forall s> 0,\forall z\in\Z^d, p_s(0,zR\beta_t)\leq C\pare{s^{-d/\alpha}\wedge \frac{s}{\abs{zR\beta_t}^{d+\alpha}}}.$$
Then, 
\begin{align*}
G_{R\beta_t,\lambda_t}(0,0)
&\leq 1+C\int_1^{+\infty} \exp(-\lambda_t s)\pare{s^{-d/\alpha} +\sum_{0<\abs{z}\leq \frac{s^{1/\alpha}}{R\beta_t}}s^{-d/\alpha} + \sum_{\abs{z}> \frac{s^{1/\alpha}}{R\beta_t}} \frac{s}{\abs{zR\beta_t}^{d+\alpha}}}ds\\
&\leq 1 + C\pare{\int_1^{\beta_t^\alpha}s^{-d/\alpha}ds +\int_{\beta_t^\alpha}^{+\infty}\frac{\exp(-\lambda_ts)}{\beta_t^d}ds} + C\int_1^{+\infty} \frac{\exp(-\lambda_ts)}{(R\beta_t)^d}ds\\
&\leq 1+ C\int_1^{\beta_t^\alpha}s^{-d/\alpha}ds +C\frac{\exp(-\lambda_t\beta_t^\alpha)}{\lambda_t\beta_t^d} +C\frac{\exp(-\lambda_t)}{\lambda_t(R\beta_t)^d}. 
\end{align*}
Remember that $\lambda_t=a\beta_t^{-\alpha}$. Then  for a constant $C$ depending on $a$ and $R$, we have,
\begin{align*}
G_{R\beta_t,\lambda_t}(0,0)
&\leq 1+ C\int_1^{\beta_t^\alpha}s^{-d/\alpha}ds + O\pare{\beta_t^{\alpha-d}}.
\end{align*}
This inequality yields the result in the three cases $\alpha<d,\alpha=d$ and $\alpha>d$.

\subsection{Proof of Proposition \ref{OKCAVA4}.} {\bf Asymptotic behavior of $\rho(a,R,t)$ when $t \rightarrow + \infty$.} \\


We want to prove that $\liminf_{t \rightarrow + \infty} \rho(a,R,t) \ge \rho(a,R).$ 
\subsubsection{Expression of $\rho(a,R,t)$ in terms of Fourier transform.}
Let us rewrite $\rho(a,R,t)$ defined by (\ref{rhoart}) in terms of Fourier transform.
Let $h$ be the function achieving the infimum in the definition of $\rho(a,R,t)$.
\begin{align*}
&<h,A_{R\beta_t} h>_{R\beta_t}\\
=& \sum_{x\in\T_{R\beta_t}} h(x) \sum_{y\in\T_{R\beta_t}} \mu_{R\beta_t} (y-x) (h(y)-h(x))\\
=& -N_{2,R\beta_t}^2(h) + \sum_{x\in\T_{R\beta_t}} \sum_{y\in\T_{R\beta_t}}h(x) \mu_{R\beta_t} (y-x) h(y)\\
=& -\frac{1}{(R\beta_t)^d}N_{2,R\beta_t}^2(F_{R\beta_t}\pare{h})\\
& + \sum_{x\in\T_{R\beta_t}} \sum_{y\in\T_{R\beta_t}}h(x)h(y) \sum_{z\in\T_{R\beta_t}} \frac{1}{(R\beta_t)^d} F_{R\beta_t}\pare{\mu_{R\beta_t}}(z)\exp\pare{2i\pi\frac{<z,y-x>}{R\beta_t}}\\
=&-\frac{1}{(R\beta_t)^d}N_{2,R\beta_t}^2(F_{R\beta_t}\pare{h}) +\frac{1}{(R\beta_t)^d} \sum_{z\in\T_{R\beta_t}} \vert F_{R\beta_t}\pare{h}(z)\vert^2F_{R\beta_t}\pare{\mu_{R\beta_t}}(z)\\
=&\frac{1}{(R\beta_t)^d} \sum_{z\in\T_{R\beta_t}} \vert F_{R\beta_t}\pare{h}(z)\vert^2\pare{F\pare{\mu}\pare{\frac{z}{R\beta_t}}-1}.
\end{align*}
We have thus shown that 
\begin{equation}
\label{rho(a,R,t)}
\beta_t^{\alpha-d/q}\rho(a,R,t)=a\beta_t^{-\frac{d}{q}}N_{2,R\beta_t}^2(h)+\frac{\beta_t^{\alpha-\frac{d}{q}}}{(R\beta_t)^d}\sum_{z\in\T_{R\beta_t}} \vert F_{R\beta_t}\pare{h}(z)\vert^2(1-F\pare{\mu}(\frac{z}{R\beta_t})).
\end{equation}
We call $a\beta_t^{-\frac{d}{q}}N_{2,R\beta_t}^2(h)$ the $2$-norm part and the second term in the right-hand side 
of \refeq{rho(a,R,t)} the gradient part. 
We set for $x \in \R^d$, 
\[ g_R(x)= \beta_t^{d/(2p)}\sum_{k\in\T_{R\beta_t}}h(k)\varphi(\beta_t x-k) \, ,
\]
where $\varphi$ is the $R\beta_t$-periodic function from $\R^d$ to $\C$, whose Fourier's coefficients are 
 \[ \F_{R\beta_t}(\varphi)(n)=\begin{cases}
 		\frac{1}{(R\beta_t)^d}
 \frac{\sqrt{1-F\pare{\mu}(\frac{n}{R\beta_t})}}{\vert\frac{n}{R\beta_t}\vert^{\alpha/2}}\ind_{[\![-\frac{R\beta_t}{2},\frac{R\beta_t}{2}[\![^d
}(n) 
 & \text{ if } n \ne 0  , \\
\frac{1}{(R\beta_t)^d} & \text{ otherwise, }
\end{cases} \]
where for any $a,b\in\R$, $[\![a,b]\!]^d=[a,b]^d\bigcap \Z^d$.

Note that $\mu$ being a symmetric probability measure, $F(\mu)$ is real valued and $\va{F(\mu)} \le 1$, so 
that $\F_{R\beta_t}(\varphi)(n)$ is well-defined. By definition, $g_R$ is a R-periodic function defined on $\R^d$ and 
for all $z \in \Z^d$,
\[\FF_R(g_R)(z) = \beta_t^{d/2p} F_{R\beta_t}(h)(z)  \FF_{R \beta_t}(\varphi)(z) \, . 
\]

$g_R$ is our candidate to achieve the infimum in the definition \refeq{rho(a,R)} of $\rho(a,R)$.

\subsubsection{Gradient part}
The function $g_R$ was built to preserve the gradient part. Indeed,

\begin{align}
\nonumber
& \frac{\beta_t^{\alpha-\frac{d}{q}}}{(R\beta_t)^d}\sum_{z\in\T_{R\beta_t}} \abs{F_{R\beta_t}\pare{h}(z)}^2(1-F\pare{\mu}(\frac{z}{R\beta_t}))
\\
\nonumber
& \hspace*{2cm} = \frac{\beta_t^{\alpha-\frac{d}{q}}}{(R\beta_t)^d}\sum_{z\in[\![-\frac{R\beta_t}{2},\frac{R\beta_t}{2}[\![^d} \abs{ F_{R\beta_t}\pare{h}(z)}^2 \F_{R\beta_t}(\varphi)(z)^2(R\beta_t)^{2d}\abs{\frac{z}{R\beta_t}}^{\alpha}
\\
\label{grad}
& \hspace*{2cm} = R^d\sum_{z\in\Z^d} \abs{ \F_R(g_R)(z)}^2 \abs{ \frac{z}{R}}^\alpha.
\end{align}

\subsubsection{$2$-norm part}
We work now on the $2$-norm part. By Parseval's equality,

\begin{align*}
\nor{g_R}_{2,R}^2
=&R^d N_{2}^2(\F_R(g_R))
= R^d\sum_{z\in [\![-\frac{R\beta_t}{2},\frac{R\beta_t}{2}[\![^d}\beta_t^{d/p}\va{ F_{R\beta_t}\pare{h}(z)}^2
\va{\F_{R \beta_t}(\varphi)(\frac{z}{R\beta_t})}^2
\\
= &\frac{R^d\beta_t^{d/p}}{(R\beta_t)^{2d}}N_{2,R\beta_t}^2(F_{R\beta_t}\pare{h})  
+\frac{R^d\beta_t^{d/p}}{(R\beta_t)^{2d}}\sum_{z\in[\![-\frac{R\beta_t}{2},\frac{R\beta_t}{2}[\![^d,\ z\ne0} \abs{F_{R\beta_t}\pare{h}(z)}^2
\cro{\frac{1-F\pare{\mu}(\frac{z}{R\beta_t})}{\va{\frac{z}{R\beta_t}}^\alpha}-1}
\\
=& \beta_t^{-d/q} N_{R\beta_t}^2(h)+\frac{R^d\beta_t^{d/p}}{(R\beta_t)^{2d}}\sum_{z\in[\![-\frac{R\beta_t}{2},\frac{R\beta_t}{2}[\![^d,\ z\ne0} \abs{F_{R\beta_t}\pare{h}(z)}^2
\cro{\frac{1-F\pare{\mu}(\frac{z}{R\beta_t})}{\va{\frac{z}{R\beta_t}}^\alpha}-1} .
\end{align*}
Under assumptions {\bf (H1)} and {\bf (H2)}
$F\pare{\mu}(u) \underset{u\rightarrow 0}{=}1-\abs{u}^\alpha+o(\abs{u}^\alpha)$ (see for instance Feller \cite{feller}, Chapter XVI.5).  So, for  any $\epsilon>0$, one can find $\delta>0$ such that for all $\va{z}<\delta$, we have 
$\left\lvert \frac{1-F(\mu)(z)}{\va{z}^\alpha}-1\right\rvert \leq \epsilon$.
Then, 
\begin{align*}
&\frac{R^d\beta_t^{d/p}}{(R\beta_t)^{2d}}\sum_{z\in[\![-\frac{R\beta_t}{2},\frac{R\beta_t}{2}[\![^d,\ z \ne 0}\va{ F_{R\beta_t}\pare{h}(z)}^2
\cro{\frac{1-F\pare{\mu}(\frac{z}{R\beta_t})}{\va{\frac{z}{R\beta_t}}^\alpha}-1}\\
&\leq \frac{R^d\beta_t^{d/p}}{(R\beta_t)^{2d}}\pare{\epsilon\sum_{z/\left\lvert \frac{z}{R\beta_t}\right\rvert<\delta } \va{F_{R\beta_t}\pare{h}(z)}^2   +  \delta^{-\alpha}\sum_{z/\delta\leq\left\lvert \frac{z}{R\beta_t}\right\rvert\leq\frac{1}{2}} \va{F_{R\beta_t}\pare{h}(z)}^2\va{1-F\pare{\mu}\pare{\frac{z}{R\beta_t}}}}\\
&\leq \epsilon \beta_t^{-d/q} N_{2,R\beta_t}^2(h) + \delta^{-\alpha}R^d\beta_t^{d/p}\sum_{z\in\Z^d}\abs{F_{R\beta_t}\pare{h}(z) \F_{R\beta_t}(\varphi)(z)}^2\abs{\frac{z}{R\beta_t}}^\alpha\\
& = \epsilon \beta_t^{-d/q} N_{2,R\beta_t}^2(h) + \delta^{-\alpha}R^d\beta_t^{-\alpha} \sum_{z\in\Z^d} \abs{\F_R(g_R)(z) }^2\abs{\frac{z}{R}}^\alpha.
\end{align*}

Moreover, by (\ref{grad}) and (\ref{rho(a,R,t)}) it is easy to see that
\begin{equation}
R^d \sum_{z\in\Z^d} \abs{\F_R(g_R)(z) }^2\abs{\frac{z}{R}}^\alpha
\leq \beta_t^{\alpha-d/q}\rho(a,R,t).
\end{equation}
Hence,
$\forall \epsilon>0$, $\exists\delta>0$ such that 
\begin{equation}
\label{norm2g}
\nor{g_R}_{2,R}^2
\leq (1+\epsilon)\beta_t^{-d/q} N_{2,R\beta_t}^2(h)+(\delta \beta_t)^{-\alpha}  \pare{\beta_t^{\alpha-d/q}\rho(a,R,t)}.\end{equation}

\subsubsection{$2p$-norm part}
We work now on the most difficult part, that is the $2p$-norm. Using Fourier inversion formula, 
\[
g_R(x)
=\beta_t^{d/(2p)}\sum_{n\in\Z^d} F_{R\beta_t}\pare{h}(n)\F_{R\beta_t}(\varphi)(n)\exp\pare{2i\pi\frac{<x,n>}{R}} \, .
\]
\begin{eqnarray}
\nonumber
\nor{g_R}_{2p,R}
& \geq & \frac{\beta_t^{d/(2p)}}{(R\beta_t)^{d}}\nor{\sum_{n\in[\![-\frac{R\beta_t}{2},\frac{R\beta_t}{2}[\![^d} F_{R\beta_t}\pare{h}(n)\exp\pare{2i\pi\frac{<\cdot,n>}{R}}}_{2p,R}  \\
&& -   \beta_t^{d/(2p)}\nor{\sum_{n\in[\![-\frac{R\beta_t}{2},\frac{R\beta_t}{2}[\![^d} F_{R\beta_t}\pare{h}(n)\pare{\F_{R\beta_t}(\varphi)(n)-\frac{1}{(R\beta_t)^d}}\exp\pare{2i\pi\frac{<\cdot,n>}{R}}}_{2p,R}
\label{2pnorm1}
\end{eqnarray}

We want to prove that the first term is close to the $2p$-norm of $h$, and that the second term is negligible.
We first work on the second term in (\ref{2pnorm1}). Let $\delta_t\underset{t\rightarrow +\infty}{\longrightarrow }0 $ to be chosen later. We first use inversion formula of Proposition \ref{inversion périodique}, Young inequality of Proposition \ref{inégalité Fourier} and  we again cut the sum in two parts. Denoting by $(2p)'$ the conjugate exponent of $2p>2$, we are led to

\begin{align}
\nonumber&\beta_t^{d/(2p)}\nor{\sum_{n\in[\![-\frac{R\beta_t}{2},\frac{R\beta_t}{2}[\![^d} F_{R\beta_t}\pare{h}(n)\pare{\F_{R\beta_t}(\varphi)(n)-\frac{1}{(R\beta_t)^d}}\exp\pare{2i\pi\frac{<\cdot,n>}{R}}}_{2p,R}\\
\nonumber\leq&(R\beta_t)^{d/(2p)} N_{(2p)',R\beta_t}\pare{ F_{R\beta_t}\pare{h}(\cdot) 
	\ind_{[\![-\frac{R\beta_t}{2},\frac{R\beta_t}{2}[\![^d}(\cdot) 
	\pare{\F_{R\beta_t}(\varphi)(\cdot)-\frac{1}{(R\beta_t)^d}}} \leq  T_1 + T_2 , 
\end{align}
with $\begin{array}[t]{ll}
T_1 & =(R\beta_t)^{d/(2p)}\pare{\sum_{n/\abs{\frac{n}{R\beta_t}}\leq\delta_t} \abs{F_{R\beta_t}\pare{h}(n)\pare{\F_{R\beta_t}(\varphi)(n)-\frac{1}{(R\beta_t)^d}}}^{(2p)'}}^{1/(2p)'}, 
\\
T_2 & = (R\beta_t)^{d/(2p)}\pare{\sum_{n/\frac{1}{2}\geq\abs{\frac{n}{R\beta_t}}>\delta_t} \abs{F_{R\beta_t}\pare{h}(n)\pare{\F_{R\beta_t}(\varphi)(n)-\frac{1}{(R\beta_t)^d}}}^{(2p)'}}^{1/(2p)'}.
\end{array}$

Let us focus on the first term $T_1$.  Set $\epsilon(u)=\va{u}^{-\alpha/2} \sqrt{1-F(\mu)(u)} - 1$. 
By our assumptions on $\mu$, $\lim_{u \rightarrow 0} \epsilon(u)=0$. Using H\"older inequality ($(2p)' < 2$) and Parseval's equality,
\begin{align*} 
T_1 \leq& \frac{(R\beta_t)^{d/(2p)}}{(R\beta_t)^d}\pare{\sum_{n/0<\abs{\frac{n}{R\beta_t}}\leq\delta_t} \abs{F_{R\beta_t}\pare{h}(n)\pare{\frac{\sqrt{1-F\pare{\mu}(\frac{n}{R\beta_t})}}{\va{\frac{n}{R\beta_t}}^{\alpha/2}}-1}}^{(2p)'}}^{1/(2p)'}
\\
\leq &\frac{(R\beta_t)^{d/(2p)}}{(R\beta_t)^d} N_{2,R\beta_t}(F_{R\beta_t}\pare{h})  \pare{\sum_{n/0<\abs{\frac{n}{R\beta_t}}\leq\delta_t} \abs{\frac{\sqrt{1-F\pare{\mu}(\frac{n}{R\beta_t})}}{\va{\frac{n}{R\beta_t}}^{\alpha/2}}-1}^{2q}}^{1/(2q)}
\\
=& \frac{(R\beta_t)^{d/(2p)}}{(R\beta_t)^{d/2}}N_{2,R\beta_t}(h)\pare{\sum_{n/\abs{\frac{n}{R\beta_t}}\leq\delta_t} \abs{\epsilon\pare{\frac{n}{R\beta_t}}}^{2q}}^{1/(2q)} \\
\leq&\frac{1}{\sqrt{a}} \pare{(\beta_t\delta_t)^{d/(2q)}\sup_{\abs{u}\leq \delta_t}\va{\epsilon(u)}}
\sqrt{\beta_t^{\alpha-d/q} \rho(a,R,t) } ,
\end{align*}
 by \refeq{rho(a,R,t)}. To be negligible compared to $\sqrt{\beta_t^{\alpha-d/q} \rho(a,R,t)}$, we have to choose $\delta_t$ such that 
\begin{equation}
\label{delta1}
(\beta_t\delta_t)^{d/(2q)}\sup_{\abs{u}\leq \delta_t} \va{\epsilon(u)}\underset{t\rightarrow +\infty}{\rightarrow} 0.
\end{equation}

Let us turn to the second term $T_2$. By assumption {\bf (H3)}, there exists $C>0$ such
 that for $\va{x} \leq \frac{1}{2}$, $\va{x}^{\alpha/2}\leq C\sqrt{1-F\pare{\mu}(x)}$. Then, using the fact that 
 $\abs{\frac{n}{R\beta_t}} \leq \frac{1}{2}$ and H\"older inequality, 
\begin{align*}
T_2 
\leq& C\frac{(R\beta_t)^{d/(2p)}}{(R\beta_t)^d}\pare{\sum_{n/\frac{1}{2}\geq\abs{\frac{n}{R\beta_t}}>\delta_t} \abs{F_{R\beta_t}\pare{h}(n)   \frac{\sqrt{1-F\pare{\mu}(\frac{n}{R\beta_t})}}{\vert\frac{n}{R\beta_t}\vert^{\alpha/2}} }^{(2p)'}}^{1/(2p)'} \\
\leq & C\frac{(R\beta_t)^{d/(2p)}}{(R\beta_t)^d} N_{2,R\beta_t}\pare{F_{R\beta_t}\pare{h}(\cdot)\sqrt{1-F\pare{\mu}\pare{\frac{\cdot}{R\beta_t}}}} (R\beta_t)^{\alpha/2} \pare{\sum_{n/\abs{n}>R\beta_t\delta_t}\abs{n}^{-\alpha q}}^{1/(2q)}\\
\leq & C(R\beta_t)^{\alpha/2-d/(2q)-d/2} N_{2,R\beta_t}\pare{F_{R\beta_t}\pare{h}(\cdot)\sqrt{1-F\pare{\mu}\pare{\frac{\cdot}{R\beta_t}}}} \pare{(R\beta_t\delta_t)^{d-\alpha q}}^{1/(2q)}.
 \end{align*}
Using (\ref{rho(a,R,t)}), we have therefore
\[ T_2 
\leq  C\delta_t^{\frac{1}{2}(d/q-\alpha)} \sqrt{\rho(a,R,t)}.
\]

To be negligible compared to $\sqrt{\beta_t^{\alpha-d/q} \rho(a,R,t)}$, it is enough to choose $\delta_t$ such that
\begin{equation}
\label{delta2}
\beta_t\delta_t\rightarrow +\infty.
 \end{equation}
Note that conditions (\ref{delta1}) and (\ref{delta2}) are compatible. Indeed, set $\eta(x) = x \sup_{\va{u} \le x} \va{\epsilon(u)}^{2q/d}$
for $x \in \R^+$. $\eta$ is an increasing function, and  (\ref{delta1}) and (\ref{delta2}) are equivalent to 
$\beta_t^{-1} \ll \delta_t \ll \eta^{-1}(\beta_t^{-1})$. Such a $\delta_t$ can be found as soon as $\lim_{x \rightarrow 0} 
\eta^{-1}(x)/x = + \infty$, which is the case since  $\lim_{x \rightarrow 0} x/\eta(x) = + \infty$.
 We have now succeeded to control the second term in (\ref{2pnorm1}) by proving that one can found $u_t$, $\lim_{t \rightarrow
  + \infty}
 u_t = 0$ such that

 \begin{equation}
 \label{2pnorm2}
 \beta_t^{d/(2p)}\nor{\sum_{n\in[\![-\frac{R\beta_t}{2},\frac{R\beta_t}{2}[\![^d} F_{R\beta_t}\pare{h}(n)\pare{\F_{R\beta_t}(\varphi)(n)-\frac{1}{(R\beta_t)^d}}
 e^{2i\pi\frac{<\cdot,n>}{R}}}_{2p,R}
\le u_t  \sqrt{\beta_t^{\alpha-d/q}\rho(a,R,t)} \, .
 \end{equation}

It remains to control the first term of the sum in (\ref{2pnorm1}). Performing the change of variable $x\rightarrow x\beta_t$,
\[ \frac{\beta_t^{d/(2p)}}{(R\beta_t)^{d}}\nor{\sum_{n\in[\![-\frac{R\beta_t}{2},\frac{R\beta_t}{2}[\![^d} F_{R\beta_t}\pare{h}(n) e^{2i\pi\frac{<\cdot,n>}{R}}}_{2p,R} 
=\nor{ \frac{1}{(R\beta_t)^{d}}\sum_{n\in[\![-\frac{R\beta_t}{2},\frac{R\beta_t}{2}[\![^d} F_{R\beta_t}\pare{h}(n) e^{2i\pi\frac{<\cdot,n>}{R\beta_t}}}_{2p,R\beta_t} .
\]
For $x\in\R^d$, let
 \begin{equation*}
 f(x):= \frac{1}{(R\beta_t)^d}\sum_{n\in[\![-\frac{R\beta_t}{2},\frac{R\beta_t}{2}[\![^d} F_{R\beta_t}\pare{h}(n)\exp\pare{2i\pi\frac{<x,n>}{R\beta_t}}.
 \end{equation*}
 Note that $f(x)=h(x)$, $\forall x\in \T_{R\beta_t}$.
  We define an approximation of $f$ by:
$$\forall x\in \R^d, \bar{f}(x)=\sum_{k\in[\![-\frac{R\beta_t}{2},\frac{R\beta_t}{2}[\![^d} \bar{f}_k\ind_{Q_k}(x)$$
where $Q_k=[k-\frac{1}{2};k+\frac{1}{2}[^d$ is the unit cube centered on $k$,
and $\bar{f}_k=\frac{1}{\abs{Q_k}}\int_{Q_k}f(x)dx$ is
 the mean of $f$ on  $Q_k.$
 For any $x\in\R^d$, we denote by $\lfloor x\rfloor$ the unique $k\in\Z^d$ such that $x\in Q_k$.
 Note that $\forall x\in Q_k, \lfloor x\rfloor=k$ and $ f(\lfloor x\rfloor)=h(k)$.
 Introducing the $R\beta_t$-periodic dunctions, $f(\lfloor\cdot\rfloor)$ and $\bar{f}(\cdot)$, we have that:

\begin{align}
\label{2pnormf}\nor{f}_{2p,R\beta_t}\geq \nor{f(\lfloor\cdot\rfloor)}_{2p,R\beta_t} 
-\nor{\bar{f}-f(\lfloor\cdot\rfloor)}_{2p,R\beta_t}
-\nor{f-\bar{f}}_{2p,R\beta_t}.
\end{align}

The first term is exactly the $2p$-norm of $h$. Let us consider the second one. 
For $x\in \R^d$,
\begin{align*}
\bar{f}(x)-f(\floor{x}) 
=& \frac{1}{(R\beta_t)^d}\sum_{n\in[\![-\frac{R\beta_t}{2},\frac{R\beta_t}{2}[\![^d}F_{R\beta_t}\pare{h}(n)
\underbrace{\int_{Q_0} e^{2i\pi\frac{<n,y>}{R\beta_t}}dy}_{C\pare{\frac{n}{R\beta_t}}}
\sum_{k\in[\![-\frac{R\beta_t}{2},\frac{R\beta_t}{2}[\![^d} e^{2i\pi\frac{<n,k>}{R\beta_t}}\ind_{Q_k}(x) \\
&\hspace*{1cm} 
-\frac{1}{(R\beta_t)^d}\sum_{n\in[\![-\frac{R\beta_t}{2},\frac{R\beta_t}{2}[\![^d}F_{R\beta_t}\pare{h}(n) e^{2i\pi\frac{< n,\lfloor x \rfloor>}{R\beta_t}}\\
=&\frac{1}{(R\beta_t)^d}\sum_{n\in[\![-\frac{R\beta_t}{2},\frac{R\beta_t}{2}[\![^d}F_{R\beta_t}\pare{h}(n)\pare{C\pare{\frac{n}{R\beta_t}}-1}\exp\pare{2i\pi\frac{<n,\lfloor x\rfloor>}{R\beta_t}}
\end{align*}
Therefore, using inversion formula of Proposition \ref{inversion finie}, Young inequality of Proposition \ref{inégalité Fourier} and H\"older inequality:
\begin{align*}
&\nor{\bar{f}-f(\lfloor\cdot\rfloor)}_{2p,R\beta_t}
\\
\leq&(R\beta_t)^{-d/(2p)'}N_{(2p)'}\pare{F_{R\beta_t}\pare{h}(\cdot) 
\ind_{[\![-\frac{R\beta_t}{2},\frac{R\beta_t}{2}[\![^d}(\cdot) 
\pare{C\pare{\frac{\cdot}{R\beta_t}}-1}}
\\
\leq& (R\beta_t)^{-d/(2p)'} N_{2,R\beta_t}\pare{F_{R\beta_t}\pare{h}(\cdot)\sqrt{1-F\pare{\mu}\pare{\frac{\cdot}{R\beta_t}}}}
N_{2q}\pare{\ind_{[\![-\frac{R\beta_t}{2},\frac{R\beta_t}{2}[\![^d}(\cdot) \frac{C\pare{\frac{\cdot}{R\beta_t}}-1}{\sqrt{1-F\pare{\mu}\pare{\frac{\cdot}{R\beta_t}}}}}.
\end{align*}
Let us have a look at  $N_{2q}\pare{\ind_{[\![-\frac{R\beta_t}{2},\frac{R\beta_t}{2}[\![^d}(\cdot)\frac{C\pare{\frac{\cdot}{R\beta_t}}-1}{\sqrt{1-F\pare{\mu}\pare{\frac{\cdot}{R\beta_t}}}}}$. 
A simple computation shows that
$$C(x)=\prod_{j=1}^d e^{i\pi x_j} \frac{\sin(\pi x_j)}{\pi x_j}\, , 
\, \, 
C(x)\underset{0}{\sim} 1+O(\va{x}).$$
Combining this with the fact that $1-F\pare{\mu}(x)\underset{0}{\sim}\va{x}^\alpha$, we have that
$\frac{C(x)-1}{\sqrt{1-F\pare{\mu}(x)}}=O(\va{x}^{1-\alpha/2}).$
Moreover, by {\bf (H3)},  $F\pare{\mu}(x)=1\Leftrightarrow x \in \Z^d$. Then the function $x\rightarrow\frac{C(x)-1}{\sqrt{1-F\pare{\mu}(x)}}$ is uniformly bounded for $\va{x} \le  \frac 1 2$ and 
 $N_{2q}\pare{\ind_{[\![-\frac{R\beta_t}{2},\frac{R\beta_t}{2}[\![^d}(\cdot)\frac{C\pare{\frac{\cdot}{R\beta_t}}-1}{\sqrt{1-F\pare{\mu}\pare{\frac{\cdot}{R\beta_t}}}}}=O((R\beta_t)^{d/(2q)}).$
 Therefore, using (\ref{rho(a,R,t)}),
 \begin{equation}
 \label{fbar-f}
  \nor{\bar{f}-f(\lfloor\cdot\rfloor)}_{2p,R\beta_t}
 \le C \sqrt{\beta_t^{-\alpha + d/q}} \sqrt{\beta_t^{\alpha - d/q}\rho(a,R,t)} .
  \end{equation}
We turn now to the third term of (\ref{2pnormf}). By Poincar\'e's inequality, as the unit cube is a Lypschitz's domain,
\begin{eqnarray*}
\nor{f-\bar{f}}_{2p,R\beta_t}^{2p}
& = & \int_{[-R\beta_t/2; R \beta_t/2[^d} \abs{f(x)-\bar{f}(x)}^{2p} \, dx 
= \sum_{k\in [\![-\frac{R\beta_t}{2},\frac{R\beta_t}{2}[\![^d}\int_{Q_k}\abs{f(x)-\bar{f}_k}^{2p}dx
\\
& \leq & 
C\sum_{k\in [\![-\frac{R\beta_t}{2},\frac{R\beta_t}{2}[\![^d} \nor{\nabla f}_{2p,Q_k}^{2p} = C\nor{\nabla f}_{2p,R\beta_t}^{2p}, 
\end{eqnarray*}
 where C depends only on $d,p$ and $\va{Q_0}$. Hence, 
 denoting by $\psi_j$ the j-th coordinate function $\psi_j(n)=n_j$,  and  using inversion formula of Proposition \ref{inversion périodique}, Young's and H\"older's inequalities,  
\begin{align*}
\nor{f-\bar{f}}_{2p,R\beta_t}
& \leq \frac{C}{(R\beta_t)^d}\sum_{j=1}^d \nor{\sum_{n\in[\![-\frac{R\beta_t}{2},\frac{R\beta_t}{2}[\![^d}F_{R\beta_t}\pare{h}(n)\frac{2i\pi \psi_j(n)}{R\beta_t}\exp\pare{2i\pi\frac{<n,\cdot>}{R\beta_t}}}_{2p,R\beta_t}
\\
& \leq  C\frac{(R\beta_t)^{d/(2p)}}{(R\beta_t)^d} \sum_{j=1}^d N_{(2p)'}\pare{F_{R\beta_t}\pare{h}(\cdot)\frac{2i\psi_j(\cdot)
		}{R\beta_t} \ind_{[\![-\frac{R\beta_t}{2},\frac{R\beta_t}{2}[\![^d}(\cdot)}
\\
& \leq C\frac{(R\beta_t)^{d/(2p)}}{(R\beta_t)^d} N_{(2p)'}\pare{F_{R\beta_t}\pare{h}(\cdot)\frac{\abs{\cdot}}{R\beta_t}
		\ind_{[\![-\frac{R\beta_t}{2},\frac{R\beta_t}{2}[\![^d}(\cdot)}
\\
& \leq  C\frac{(R\beta_t)^{d/(2p)}}{(R\beta_t)^d} N_{2,R\beta_t}\pare{F_{R\beta_t}\pare{h}(\cdot)\sqrt{1-F\pare{\mu}\pare{\frac{\cdot}{R\beta_t}}}}
\\
&\hspace{2.5cm}
N_{2q}\pare{\frac{\va{\cdot}}{R\beta_t}\pare{1-F\pare{\mu}\pare{\frac{\cdot}{R\beta_t}}}^{-1/2} 
			\ind_{[\![-\frac{R\beta_t}{2},\frac{R\beta_t}{2}[\![^d}(\cdot)}
\end{align*}
As previously, $1-F\pare{\mu}(x)\underset{0}{\sim}\va{x}^\alpha$ implies that $\frac{\abs{x}}{\sqrt{1-F\pare{\mu}(x)}}\underset{0}{\sim}\va{x}^{1-\alpha/2}$, which is bounded around $0$ since $\alpha\in]0,2]$. By {\bf (H3)}, 
we deduce that  the function $x\rightarrow\frac{\abs{x}}{\sqrt{1-F\pare{\mu}(x)}} $ is uniformly bounded for $\va{x} \le 1/2$. Therefore,
$
N_{2q}\pare{\frac{\abs{\cdot}}{R\beta_t}\pare{1-F\pare{\mu}\pare{\frac{\cdot}{R\beta_t}}}^{-1/2}
\ind_{[\![-\frac{R\beta_t}{2},\frac{R\beta_t}{2}[\![^d}(\cdot)}
=O((R\beta_t)^{d/(2q)})$, 
and using (\ref{rho(a,R,t)}),
\begin{equation}
\label{f-fbar}
\nor{f-\bar{f}}_{2p,R\beta_t}
\le C \sqrt{\beta_t^{-\alpha + d/q}} \sqrt{\beta_t^{\alpha - d/q}\rho(a,R,t)} .
\end{equation}

Combining (\ref{2pnormf}), (\ref{fbar-f}), (\ref{f-fbar}) and the fact that $N_{2p,R\beta_t}(h)=1$, we have shown that
\begin{align}
\nor{f}_{2p,R\beta_t}&\geq 1- C \sqrt{\beta_t^{-\alpha + d/q}} \sqrt{\beta_t^{\alpha - d/q}\rho(a,R,t)} . 
\label{2pnormf2}
\end{align}

Putting(\ref{2pnorm1}), (\ref{2pnorm2}), (\ref{2pnormf2})  together ,  as $\alpha-d/q>0$, we have controlled the $2p$-norm of $g_R$ by:
\begin{equation}
\nor{g_R}_{2p,R} 
\geq  1 - u_t \sqrt{\beta_t^{\alpha-d/q}\rho(a,R,t)} ,
\label{controle norme 2p}
\end{equation}
for some positive function $(u_t, t \ge 0)$ satisfying $\lim_{t \rightarrow + \infty} u_t =0$.
Finally, $\forall R>0$, $\forall \epsilon>0,\exists \delta>0$ such that
\begin{align*}
& \beta_t^{\alpha-d/q}\rho(a,R,t)
\\
& \hspace*{1cm}
= a\beta_t^{-\frac{d}{q}} N_{2,R\beta_t}^2(h)  -\beta_t^{\alpha-\frac{d}{q}} <h,A_{R\beta_t} h>_{R\beta_t}
\\
& \hspace*{1cm}
\geq \frac{1}{1+\epsilon}\pare{a \nor{g_R}_{2,R}^2+R^d\sum_{z\in\T_{R\beta_t}} \abs{ \F_R(g_R)(z)}^2 \abs{ \frac{z}{R}}^\alpha}
- \frac{a}{1+\epsilon} (\delta \beta_t)^{-\alpha} \pare{\beta_t^{\alpha-d/q}\rho(a,R,t)}
\\
&\hspace*{1cm} 
\geq \frac{\nor{g_R}_{2p,R}^2}{1+\epsilon} \rho(a,R) - \frac{a}{1+\epsilon} (\delta \beta_t)^{-\alpha} \pare{\beta_t^{\alpha-d/q}\rho(a,R,t)}
\\
 &\hspace*{1cm} 
  \geq  \frac{1-u_t \sqrt{\beta_t^{\alpha-d/q}\rho(a,R,t)}}{1+\epsilon}
 \rho(a,R) - \frac{a}{1+\epsilon} (\delta \beta_t)^{-\alpha} \pare{\beta_t^{\alpha-d/q}\rho(a,R,t)} .
\end{align*}
 We can assume that $\liminf_{t \rightarrow + \infty}  \beta_t^{\alpha-d/q}\rho(a,R,t) < + \infty$. Otherwise, there is nothing to
 prove. Letting first $t  \rightarrow + \infty $, then  $\epsilon\rightarrow 0$ in the above inequality, we obtain that for all $R>0$,  
\begin{equation*}
\liminf_{t\rightarrow +\infty}
\beta_t^{\alpha-d/q}\rho(a,R,t) 
\geq \rho(a,R).
\end{equation*}

\subsection{Proof of Proposition \ref{OKCAVA4}.}{\bf Asymptotic behavior of $\rho(a,R)$ when $R \rightarrow + \infty$.}

It remains now to prove that for any $a>0$,
\begin{eqnarray}
I:= \liminf_{R\rightarrow+\infty} \, \rho(a,R) 
\label{lim.per} \geq  \rho(a)
\, . 
\end{eqnarray}
This statement was already proved in \cite{BK,ClemEJP} for $\alpha =2$. We show it for $\alpha < 2$. 
 
We assume that $I<+\infty$, otherwise there is nothing to prove. Let then $\varepsilon>0$. For any $R_0 > 0$, let  $R\geq R_0$ and 
 let $g_R$  be such that $\nor{g_R}_{2p,R}=1$ and 
\begin{equation}\label{eq:agR}
a\nor{g_R}_{2,R}^2+R^d\sum_{z\in\Z^d} \abs{\frac{z}{R}}^\alpha \abs{\mathcal{F}_R(g_R)(z)}^2\leq I+\varepsilon 
\end{equation}
Note that any translation of order $\theta$ of $g_R$ also satisfies (\ref{eq:agR}). Indeed, 
setting $g_{R,\theta}= g_R(\theta+ .)$, and using that $g_R$ is periodic, we get 
 $\nor{g_{R,\theta}}_{2p,R}=\nor{g_R}_{2p,R}$, 
$\nor{g_{R,\theta}}_{2,R}=\nor{g_R}_{2,R}$, 
$\F_R(g_R(\theta+.))(n)=\exp\pare{2i\pi\frac{<n,\theta>}{R}} \F_R(g_R)(n)$, and 
 $\sum_{z\in\Z^d} \abs{\frac{z}{R}}^\alpha \abs{\mathcal{F}_R(g_R)(z)}^2=\sum_{z\in\Z^d} \abs{\frac{z}{R}}^\alpha \abs{\mathcal{F}_R(g_{R,\theta})(z)}^2 $.

Let $E_R = \pare{[0;\sqrt{R}] \cup [R-\sqrt{R};R]}^d$ and $Q_R = [0,R]^d$. 
\[ \inf_{\theta \in Q_R} \int_{E_R} \va{g_{R,\theta}(x)}^{2p} \, dx 
\le \frac{1}{R^d} \int_{Q_R}  \int_{E_R} \va{g_{R,\theta}(x)}^{2p} \, dx  \, d\theta 
= (2 R^{-1/2})^d \, .
\]
Therefore one can found $\theta$ such that $\int_{E_R} \va{g_{R,\theta}(x)}^{2p} \, dx \le 3^d R^{-d/2}$, and
 we can assume without loss of generality that $g_R$ also satisfies 
\begin{equation}
\label{cont.bord}
\int_{E_R} \va{g_{R}(x)}^{2p} \, dx \le 3^d R^{-d/2} \, .
\end{equation}
Let $\phi_R$ be the function from $\R$ to $\R$, whith compact support in $[0;R]$, equal to $1$ on 
$[0,R]\setminus([0;\sqrt{R}] \cup [R-\sqrt{R};R])$, and which is linear on $[0;\sqrt{R}]$ and  $[R-\sqrt{R};R]$.
Let $\psi_R: \R^d \mapsto [0,1]$ be defined by 
$\psi_R(x)=\prod_{i=1}^d \phi_R(x_i)$. Note that $\psi_R$ has compact
support in $Q_R$, is equal to $1$ on $Q_R \setminus E_R$, and that for all $x \in \R^d$, 
$\nor{\nabla \psi_R(x)} \le \sqrt{d/R}$.

Let take $g(x)=g_{R}(x)\psi_R(x) $. $g$ is our candidate to realize the infimum defining $\rho(a)$. Note that
\begin{equation}
\label{contL2}
\nor{g}_2^2 = \int_{\R^d}  g_R^2(x) \psi_R^2(x) \, dx \le \int_{Q_R}  g_R^2(x)  \, dx \, .
\end{equation}

\begin{equation}
\label{contLp}
\nor{g}_{2p}^{2p}= \int_{\R^d}  \va{g_R(x)}^{2p} \psi_R^{2p}(x) \, dx 
\ge \int_{Q_R \setminus E_R}  \va{g_R(x)}^{2p}  \, dx  \ge 1 - 3^d R^{-d/2} \, ,
\end{equation}
where the last inequality comes from (\ref{cont.bord}).

Let us now estimate $\int_{\R^d}\abs{\mathcal{F}(g)(\omega)}^2 \abs{\omega}^\alpha d\omega$. To begin
 with, note that 
\begin{equation}
\label{def.grad}
\int_{\R^d}\abs{\mathcal{F}(g)(\omega)}^2 \abs{\omega}^\alpha d\omega
= c_{\alpha,d} \int_{\R^d} \int_{\R^d} \frac{(g(x)-g(y))^2}{\va{x-y}^{d+\alpha}} \, dx \, dy \, , 
\end{equation}
for some constant  $c_{\alpha,d} \in ]0;+\infty[$. Indeed, using Parseval's identity, 
\begin{eqnarray*}
\int_{\R^d} \int_{\R^d} \frac{(g(x)-g(y))^2}{\va{x-y}^{d+\alpha}} \, dx \, dy \,
& = &  \int_{\R^d} \frac{1}{\va{z}^{d+\alpha}}  \pare{ \int_{\R^d} (g(z+y)-g(y))^2 \, dy} \, dz \, .
\\
& = & \int_{\R^d} \frac{1}{\va{z}^{d+\alpha}}  \pare{ \int_{\R^d} \va{e^{- 2 \pi i \bra{z;\omega}} -1}^2 
\va{\FF(g)(\omega)}^2 \, d \omega} \, dz
\\
& = &  \int_{\R^d}  \va{\FF(g)(\omega)}^2  
\pare{ \int_{\R^d} \frac{4 \sin^2(\pi \bra{z;\omega}) }{\va{z}^{d+\alpha}} \, dz} \, d\omega
 \end{eqnarray*}  
Let $H(\omega) = \int_{\R^d} \frac{4 \sin^2(\pi \bra{z;\omega}) }{\va{z}^{d+\alpha}} \, dz$. For any orthogonal matrix 
 $O$ of $\R^d$, $H(O \omega)=H(\omega)$, so that 
\[ H(\omega) = H(\va{\omega} e_1)=  \va{\omega}^{\alpha}
\int_{\R^d} \frac{4 \sin^2(\pi z_1) }{\va{z}^{d+\alpha}} \, dz := \va{\omega}^{\alpha} c_{\alpha,d} \, ,
\]
where $c_{\alpha,d} \in ]0;\infty[$ for $\alpha \in ]0;2[$. This gives (\ref{def.grad}). 
Similarly, one obtains that 
\begin{equation}
\label{def.gradper}
R^d \sum_{z \in \Z^d} \va{\frac{z}{R}}^{\alpha} \abs{\mathcal{F}_R(g_R)(z)}^2 
= c_{\alpha,d} \int_{Q_R} \int_{\R^d} \frac{(g_R(x)-g_R(x+y))^2}{\va{y}^{d+\alpha}} \, dy \, dx \, , 
\end{equation}
for the same constant  $c_{\alpha,d}$ as in (\ref{def.grad}). Now, $\forall \delta > 0$,
\begin{eqnarray*}
&& \int_{\R^d} \int_{\R^d} \frac{(g(x)-g(y))^2}{\va{x-y}^{d+\alpha}} \, dx \, dy
 =  \int_{\R^d} \int_{\R^d} 
\frac{\cro{(g_R(x)-g_R(y)) \psi_R(x) + g_R(y) (\psi_R(x)-\psi_R(y))}^2}{\va{x-y}^{d+\alpha}} \, dx \, dy
\\
& & 
 \le  (1 + \delta) \int_{\R^d} \int_{\R^d} \frac{(g_R(x)-g_R(y))^2}{\va{x-y}^{d+\alpha}} \psi_R^2(x)\, dx \, dy
+ \frac{1 + \delta}{\delta} \int_{\R^d} \int_{\R^d} g_R^2(y) \frac{(\psi_R(x)-\psi_R(y))^2}{\va{x-y}^{d+\alpha}}
\, dx \, dy
\\
& & 
 \le  (1 + \delta)  \int_{Q_R} \int_{\R^d} \frac{(g_R(x)-g_R(x+z))^2}{\va{z}^{d+\alpha}} \, dz \, dx
+ \frac{1 + \delta}{\delta} \int_{\R^d} \int_{\R^d} g_R^2(y) \frac{(\psi_R(x)-\psi_R(y))^2}{\va{x-y}^{d+\alpha}}
\, dx \, dy \, .
\end{eqnarray*}
But, $\forall x, y \in \R^d$, $\va{\psi_R(x)-\psi_R(y)} \le \min(\sqrt{\frac{d}{R}} \va{x-y},1) (\ind_{Q_R}(x)
+ \ind_{Q_R}(y))$. This leads to 
 \begin{eqnarray*} 
 && \int_{\R^d} \int_{\R^d} g_R^2(y) \frac{(\psi_R(x)-\psi_R(y))^2}{\va{x-y}^{d+\alpha}}
\, dx \, dy \, 
\\
&& 
\le \int_{Q_R}  g_R^2(y) \pare{\int_{\R^d}  \frac{\min(\sqrt{\frac{d}{R}} \va{z},1)^2}{\va{z}^{d+\alpha}} dz} dy
+ 
\int_{Q_R} \int_{\R^d} g_R^2(x+z)  \frac{\min(\sqrt{\frac{d}{R}} \va{z},1)^2}{\va{z}^{d+\alpha}} \, dz \, dx
\\
&&
= 2 \pare{\int_{Q_R}  g_R^2(y) \, dy}  
 \pare{\int_{\R^d}  \frac{\min(\sqrt{\frac{d}{R}} \va{z},1)^2}{\va{z}^{d+\alpha}} dz} \, .
\end{eqnarray*}
It is easy to check that  there exists a constant $C$ ( only depending  on $\alpha$ and $d$) such that 
$\int_{\R^d} \frac{\min(\sqrt{\frac{d}{R}} \va{z},1)^2}{\va{z}^{d+\alpha}} dz \le C R^{-\alpha/2}$. It 
follows then from (\ref{eq:agR}),(\ref{def.grad}),  (\ref{def.gradper})  that  $\exists C$ such that 
$\forall \delta  > 0$, 
\begin{equation}
\label{cont.grad}
\int_{\R^d} \va{\omega}^{\alpha} \va{\FF(g)(\omega)}^2 d \omega
\le (1+\delta) R^d \sum_{z \in \Z^d} \va{\frac{z}{R}}^{\alpha} \va{\FF_R(g_R)(z)}^2 
+ C \frac{1+\delta}{\delta} \frac{I +\epsilon}{a} R^{-\alpha/2} \, .
\end{equation}
Combining (\ref{contL2}), (\ref{contLp}), (\ref{cont.grad}), we get that $\exists  C$ such that 
$\forall \epsilon > 0$, $\forall R_0 > 0$, ,$\exists R \ge R_0$ such that $\forall \delta > 0$, 
\begin{eqnarray*} 
& \inf & \acc{ a \nor{f}_2^2 + \int_{\R^d} \va{\omega}^{\alpha} \va{\FF(f)(\omega)}^2 d \omega ; \nor{f}_{2p}=1}
\\
& \le &  \frac{a \nor{g}_2^2 + \int_{\R^d} \va{\omega}^{\alpha} \va{\FF(g)(\omega)}^2 d \omega}{\nor{g}_{2p}^2}
\\
& \le & (1+\delta) \frac{a \nor{g_R}_{2,R}^2 + R^d \sum_{z \in \Z^d} 
		\va{\frac{z}{R}}^{\alpha} \va{\FF_R(g_R)(z)}^2}{1 - 3^d R^{-d/2}} 
		+ C \frac{1+\delta}{\delta} \frac{I + \epsilon}{a} \frac{R^{-\alpha/2}}{1 - 3^d R^{-d/2}} 
\\
& \le & (1+\delta) \frac{I + \epsilon}{1 - 3^d R^{-d/2}} + C \frac{1+\delta}{\delta} 
\frac{I + \epsilon}{a} \frac{R^{-\alpha/2}}{1 - 3^d R^{-d/2}} 
\end{eqnarray*}
This ends the proof of \refeq{lim.per}  by first letting $R_0$ go to $\infty$, and then $\delta$ and $\epsilon$ go
to $0$.  

To see that $\rho(a) = a^{1-d/(\alpha q)} \rho_{\alpha, d, p}$, use the transformation $g \mapsto g_{\lambda} =
\lambda^{d/(2p)} g(\lambda .)$ ($\lambda > 0$), and optimize over $\lambda$.
This ends the proof of Proposition \ref{OKCAVA4}.



 \section{Exponential moments lower bound.}
 \label{ExpLB}
The aim of this section is to prove the lower bound in \refeq{loglapNp}. This part is inspired by the proof of Theorem 1.3 in \cite{CL}. 
 Let us assume for a while the following theorem:
 \begin{theo}
 \label{TI}
 For any continuous function $f$ on $\R^d$ with compact support,
\begin{align*}
& \liminf_{t\rightarrow+\infty} \frac{\beta_t^\alpha}{t}\log \E\exp\pare{\beta_t^{-\alpha}\int_0^t f\pare{\frac{X_s}{\beta_t}}ds}\\
 \geq &\sup\acc{\int_{\R^d}f(x)g(x)^2dx-\int_{\R^d}\va{\omega}^\alpha\va{\F(g)(\omega)}^2d\omega,\nor{g}_2=1,g\geq 0}.
\end{align*}
 \end{theo}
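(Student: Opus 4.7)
The plan is to establish this lower bound via a Rayleigh--Ritz argument combined with the Fourier analysis already developed in Section \ref{prelim} and exercised in the proof of Proposition \ref{OKCAVA4}. First I would project the walk onto a torus $\T_{R\beta_t}$ where $R$ is taken large enough that the compactly supported function $f$ and the (smooth, compactly supported) test function $g$ both fit inside one fundamental domain. Since $f$ has compact support in $[-R/2,R/2]^d$ (say), we have $f(X_s/\beta_t) = f(X_s^{R\beta_t}/\beta_t)$ as long as the projection does not wrap around, and in any case the effect of the projection is controlled by the event that the walk exits a ball of radius $R\beta_t/2$, which is exponentially negligible on the scale $t/\beta_t^{\alpha}$ (indeed the walk is diffusive on scale $\beta_t$, and on time $t\gg\beta_t^{\alpha}$ we can absorb this into the constant by further enlarging $R$).

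Next, set $V_t(x) = \beta_t^{-\alpha} f(x/\beta_t)$. By the Feynman--Kac representation,
\[
\E_0 \exp\pare{\int_0^t V_t(X_s^{R\beta_t})\,ds} = [e^{t(A_{R\beta_t}+V_t)}\mathbf{1}](0).
\]
Since $A_{R\beta_t}+V_t$ is self-adjoint on $\ell^2(\T_{R\beta_t})$, has a bounded potential, and defines an irreducible semigroup, Perron--Frobenius yields a principal eigenvalue $\lambda_1$ and a strictly positive eigenvector $\phi_1$, and the spectral decomposition gives
\[
[e^{t(A_{R\beta_t}+V_t)}\mathbf{1}](0) \ge e^{t\lambda_1} \phi_1(0) \langle\mathbf{1},\phi_1\rangle_{R\beta_t},
\]
whose prefactor contributes only $o(t/\beta_t^{\alpha})$ to the log.

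The core step is to bound $\lambda_1$ from below by Rayleigh--Ritz, applied to the test function $h_t(x) = \beta_t^{-d/2} g(x/\beta_t)$, viewed as an element of $\ell^2(\T_{R\beta_t})$:
\[
\lambda_1 \ge \frac{\bra{h_t,(A_{R\beta_t}+V_t)h_t}_{R\beta_t}}{\bra{h_t,h_t}_{R\beta_t}}.
\]
Straightforward Riemann-sum asymptotics give $\bra{h_t,h_t}_{R\beta_t}\to \nor{g}_2^2=1$ and $\beta_t^{\alpha}\bra{h_t,V_t h_t}_{R\beta_t}\to \int f g^2\,dx$. For the Dirichlet form, I would use
\[
-\bra{h_t, A_{R\beta_t} h_t}_{R\beta_t} = \frac{1}{(R\beta_t)^d}\sum_{z\in\T_{R\beta_t}} \va{F_{R\beta_t}(h_t)(z)}^2 \pare{1-F(\mu)\pare{\tfrac{z}{R\beta_t}}},
\]
exactly as in the proof of Proposition \ref{OKCAVA4}. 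Expressing $F_{R\beta_t}(h_t)$ in terms of $\F(g)$ via the periodization identity \refeq{lien Fourier fini infini} after rescaling, using the expansion $1-F(\mu)(u)=\va{u}^{\alpha}+o(\va{u}^{\alpha})$ for small $u$, and exploiting the rapid decay of $\F(g)$ to discard large-frequency contributions, one obtains
\[
\beta_t^{\alpha}\bra{h_t,A_{R\beta_t}h_t}_{R\beta_t}\longrightarrow -\int_{\R^d}\va{\omega}^{\alpha}\va{\F(g)(\omega)}^2\,d\omega.
\]
This is the computation of Section \ref{limrho} run in reverse.

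Combining these ingredients yields
\[
\liminf_{t\to+\infty}\frac{\beta_t^{\alpha}}{t}\log\E\exp\pare{\beta_t^{-\alpha}\int_0^t f(X_s/\beta_t)\,ds} \ge \int_{\R^d}f(x)g(x)^2\,dx - \int_{\R^d}\va{\omega}^{\alpha}\va{\F(g)(\omega)}^2\,d\omega
\]
for every smooth, compactly supported $g\ge 0$ with $\nor{g}_2=1$; by density, taking the supremum over this class is equivalent to taking the supremum over all admissible $g$ in the statement. The main obstacle I anticipate is a careful handling of the Fourier asymptotics of the Dirichlet form: the frequency sum must be split into a low-frequency part (where $1-F(\mu)(u)\sim\va{u}^{\alpha}$ is used) and a high-frequency part (which is negligible thanks to the decay of $\F(g)$), together with a dominated-convergence argument that leverages the aliasing between $F_{R\beta_t}(h_t)$ and $\F(g)$. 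A secondary technical point is the justification of the passage from $\Z^d$ to $\T_{R\beta_t}$, which requires either an exit-time estimate or a direct coupling; both are standard given the stable scaling assumption \textbf{(H2)}.
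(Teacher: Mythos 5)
Your proposal takes a genuinely different route from the paper's. You go through the Feynman--Kac semigroup on the torus, a Perron--Frobenius principal eigenvalue, and a Rayleigh--Ritz lower bound with a direct Fourier-analytic computation of the discrete Dirichlet form. The paper instead works directly on $\Z^d$: it blocks time into $\gamma_t$ pieces of length $\ent{\beta_t^\alpha}$, uses the local limit theorem to spread the initial point, applies Jensen's inequality to the spectral measure of the self-adjoint one-block transfer operator $\Pi_t$ to obtain $\bra{\xi_t,\Pi_t^{\gamma_t-1}\xi_t}\ge\bra{\xi_t,\Pi_t\xi_t}^{\gamma_t-1}$, identifies the limit of $\bra{\xi_t,\Pi_t\xi_t}$ via the functional stable invariance principle from \textbf{(H2)}, and finishes with a second spectral/Jensen step for the continuous semigroup $S_1$. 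Your route, if correct, would be more self-contained in that it never invokes the limiting stable process; however, as written it has two genuine gaps.

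First, the projection onto the torus $\T_{R\beta_t}$ goes the wrong way for a lower bound. For the upper bound the paper uses $N_p(l_t)\le N_{p,R\beta_t}(l_{R\beta_t,t})$, which follows from periodization; there is no comparable inequality in the other direction. Your heuristic that exit from a ball of radius $R\beta_t/2$ is exponentially negligible on scale $t/\beta_t^\alpha$ is false: by \textbf{(H2)} the walk is at distance of order $s^{1/\alpha}\gg\beta_t$ once $s\gg\beta_t^\alpha$, and the time horizon $t$ is chosen precisely so that $t\gg\beta_t^\alpha$, so wrapping around is typical, not rare, and $f(X_s/\beta_t)\ne f(X_s^{R\beta_t}/\beta_t)$ on most of $[0,t]$. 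The correct finite-volume device for a lower bound is the Dirichlet box (the walk killed upon exiting $[-R\beta_t/2,R\beta_t/2]^d$), which trivially yields $\E^{\Z^d}\cro{\cdot}\ge\E^{\text{killed}}\cro{\cdot}$; the Perron--Frobenius and Rayleigh--Ritz machinery then applies to the killed semigroup, and since your test vector is supported strictly inside the box the Dirichlet form computation is unchanged.

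Second, the spectral inequality you assert, $[e^{t(A_{R\beta_t}+V_t)}\mathbf 1](0)\ge e^{t\lambda_1}\phi_1(0)\bra{\mathbf 1,\phi_1}_{R\beta_t}$, does not follow from the spectral decomposition: the cross terms $e^{t\lambda_j}\phi_j(0)\bra{\mathbf 1,\phi_j}$ for $j\ge2$ can have either sign and cannot simply be dropped. You need either the on-diagonal bound $\E_0\cro{\exp(\cdots);\,X_t=0}=\bra{\delta_0,e^{tB}\delta_0}=\sum_j e^{t\lambda_j}\va{\phi_j(0)}^2\ge e^{t\lambda_1}\va{\phi_1(0)}^2$, or the positivity-preservation bound $\mathbf 1\ge\phi_1/\max\phi_1$ giving $[e^{tB}\mathbf 1](0)\ge e^{t\lambda_1}\phi_1(0)/\max\phi_1$. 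In either case you then have to argue that $\phi_1(0)$ (or $\phi_1(0)/\max\phi_1$) is not exponentially small on the relevant scale; this is precisely what the paper achieves by truncating the time window to $[\ent{\beta_t^\alpha},\ent{\beta_t^\alpha}\gamma_t]$ and invoking the local limit theorem of Le Gall--Rosen to bound $\P(X_{\ent{\beta_t^\alpha}}=x)$ from below on the support of $\xi_t$. This step is a real ingredient, not a cosmetic prefactor, and your proposal does not supply a substitute for it.
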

It follows from $I_t=\beta_t^d\int_{\R^d}l_t(\lfloor \beta_t x\rfloor)^pdx$ that
\begin{align}
\label{1}
 \frac{\beta_t^\alpha}{t}\log \E\exp\pare{\theta \beta_t^{d/q-\alpha}N_p(l_t)}
 =\frac{\beta_t^\alpha}{t}\log \E\exp\pare{\theta\beta_t^{d-\alpha}\pare{\int_{\R^d} l_t(\lfloor \beta_tx\rfloor)^pdx}^{1/p}}.
 \end{align}
 Let $f$ be a continuous function with compact support such that $\nor{f}_{q}=1$. By H\"older inequality we have that:
 \begin{align*}
 \pare{\int_{\R^d} l_t(\lfloor \beta_tx\rfloor)^pdx}^{1/p} 
 &\geq \int_{\R^d}f(x)l_t(\lfloor\beta_t x\rfloor)dx\\
 &= \beta_t^{-d}\int_{\R^d} f\pare{\frac{x}{\beta_t}}l_t(\lfloor x\rfloor)dx\\
 &=\beta_t^{-d}\sum_{x\in\Z^d}  l_t(x) \int_{[x,x+1)^d} f\pare{\frac{y}{\beta_t}}dy .
 \end{align*}
 As $f$ is uniformly continuous and $\beta_t\rightarrow +\infty$,
 \begin{align}
\nonumber
 \pare{\int_{\R^d} l_t(\lfloor \beta_tx\rfloor)^pdx}^{1/p} 
 &\geq\beta_t^{-d}\sum_{x\in\Z^d} l_t(x)\pare{f\pare{\frac{x}{\beta_t}}+o(1)}\\
\label{2}
&= \beta_t^{-d}\pare{\int_0^tf\pare{\frac{X_s}{\beta_t}}ds+o(1)}.
 \end{align}
Combining (\ref{1}) and (\ref{2}),
 \begin{equation*}
  \frac{\beta_t^\alpha}{t}\log \E\exp\pare{\theta \beta_t^{d/q-\alpha}N_p(l_t)}
  \geq  \frac{\beta_t^\alpha}{t}\log \E\exp\pare{\beta_t^{-\alpha}\pare{\int_0^t \theta f\pare{\frac{X_s}{\beta_t}}ds+o(1)} }
 \end{equation*}
 Then taking the limit over $t$ and using Theorem \ref{TI}, 
 \begin{align*}
 & \liminf_{t\rightarrow +\infty}   \frac{\beta_t^\alpha}{t}\log  \E\exp\pare{\theta \beta_t^{d/q-\alpha}N_p(l_t)}
 \\
 & \hspace*{1cm}
 \geq \sup\acc{\theta \int_{\R^d}f(x)g(x)^2dx-\int_{\R^d}\va{\omega}^\alpha\va{\F(g)(\omega)}^2d\omega,\nor{g}_2=1,g\geq 0}
 .
 \end{align*}
 We take the supremum over all functions $f$ with compact support such that $\nor{f}_{q}=1$ and exchange the two supremum.  This
 immediately leads to
 \begin{equation}
 \label{BI}
 \liminf_{t\rightarrow +\infty}   \frac{\beta_t^\alpha}{t}\log \E\exp\pare{\theta \beta_t^{d/q-\alpha}N_p(l_t)}
 \geq \sup\acc{\theta \nor{g}_{2p}^2-\int_{\R^d}\va{\omega}^\alpha\va{\F(g)(\omega)}^2d\omega,\nor{g}_2=1,g\geq 0}. 
 \end{equation}

 Note that by \refeq{def.grad}, $\int_{\R^d}\va{\omega}^\alpha\va{\F(g)(\omega)}^2d\omega \ge 
 \int_{\R^d}\va{\omega}^\alpha\va{\F(\va{g})(\omega)}^2d\omega$, so that we can remove the 
 constraint $g \ge 0$ in the supremum  in \refeq{BI}. All that remains to show now  is 
 \begin{equation}
 \label{BIder}
   \sup\acc{\theta \nor{g}_{2p}^2-\int_{\R^d}\va{\omega}^\alpha\va{\F(g)(\omega)}^2d\omega,\nor{g}_2=1}
 = \pare{\frac{\theta}{\rho_{\alpha,d,p}}}^{\frac{\alpha q}{\alpha q-d}} \, .
 \end{equation}
 Using the tranformation $g \mapsto \lambda^{d/2} g(\lambda .)$ ($\lambda > 0$) and optimizing over $\lambda$ yields
 \begin{align*}
& \sup\acc{\theta \nor{g}_{2p}^2-\int_{\R^d}\va{\omega}^\alpha\va{\F(g)(\omega)}^2d\omega,\nor{g}_2=1}  
\\ 
& \hspace*{1cm}
= \frac{\alpha q -d}{\alpha q} \pare{\frac{d}{\alpha q}}^{\frac{d}{\alpha q-d}} \theta^{\frac{\alpha q}{\alpha q -d}}
\sup \acc{ \nor{g}_{2p}^{2 \frac{\alpha q}{\alpha q -d}} \pare{\int_{\R^d} \va{\omega}^{\alpha} \va{\F(g)(\omega)}^2 \, 
d \omega}^{- \frac{d}{\alpha q -d}}\, , \nor{g}_2 =1} . 
\end{align*}
The expression in the supremum above is invariant under the transformation $g \mapsto \lambda^{d/2} g(\lambda .)$. We can
therefore freely 
add the constraint $\nor{g}_{2p} =1$. This gives \refeq{BIder} and ends the proof of the lower bound in \refeq{loglapNp}.

\vspace{.5cm}

It remains to prove  Theorem \ref{TI}. \\
 {\em Proof of Theorem \ref{TI}.} \\
We first split the time interval $[0,t]$ into interval of length $\ent{\beta_t^\alpha}$. We set $\gamma_t=\ent{\frac{t}{\ent{\beta_t^\alpha}}}$.
\[ \E\exp\pare{\beta_t^{-\alpha}\int_0^t f\pare{\frac{X_s}{\beta_t}}ds}
 \geq  \exp\pare{-2\nor{f}_\infty} \E\exp\pare{\beta_t^{-\alpha} \int_{\ent{\beta_t^{\alpha}}}^{\ent{\beta_t^{\alpha}} \gamma_t} f\pare{\frac{X_s}{\beta_t}}ds}.
\] 
Let us introduce the two following operators. For any $\xi$ in $\ell^2(\Z^d)$ and for any $x\in\Z^d$:
\begin{align*}
&\Pi_t\xi (x)= \E_x\cro{\exp\pare{\beta_t^{-\alpha} \int_0^{\ent{\beta_t^\alpha}} f\pare{\frac{X_s}{\beta_t}}ds} \xi(X_{\ent{\beta_t^\alpha}})}
\\
&T_t\xi(x)= \E_x\cro{\exp\pare{\beta_t^{-\alpha}\int_0^{1}f\pare{\frac{X_s}{\beta_t} }ds}\xi(X_1) }
\end{align*}
$(X_t, t \ge 0)$ being symmetric,  $T_t$ is self-adjoint and the Markov property implies that $T_t^{\ent{\beta_t^\alpha}}=\Pi_t$.
 It follows that $\Pi_t$ is also self-adjoint. Now, let us introduce a non negative function $g\in\mathcal{C}^{\infty}_c(\R^d)$, the set of infinitely differentiable function with compact support, such that $\nor{g}_2=1$. We assume that the support of $g$ is included in 
 $[-M,M]^d$ and define $\xi_t$ by  $\xi_t(x)=\beta_t^{-d/2} g\pare{\frac{x}{\beta_t}}.$
\begin{align*}
&\E \cro{\exp\pare{\beta_t^{-\alpha} \int_{\ent{\beta_t^{\alpha}}}^{\ent{\beta_t^{\alpha}} \gamma_t} f\pare{\frac{X_s}{\beta_t}}ds}}\\
=& \sum_{x\in\Z^d} \P(X_{\ent{\beta_t^\alpha}} =x) \E_x \cro{\exp\pare{\beta_t^{-\alpha} \int_{0}^{\ent{\beta_t^{\alpha}} (\gamma_t-1)} f\pare{\frac{X_s}{\beta_t}}ds}}\\
\geq &\frac{1}{\sup\limits_{x\in\Z^d}\va{\xi_t(x)}^2} \sum_{x\in\Z^d} \P(X_{\ent{\beta_t^\alpha}} =x)\xi_t(x) 
\E_x\cro{\exp\pare{\beta_t^{-\alpha} \int_{0}^{\ent{\beta_t^{\alpha}} (\gamma_t-1)} f\pare{\frac{X_s}{\beta_t}}ds}\xi_t(X_{\ent{\beta_t^{\alpha}} (\gamma_t-1)})}\\
=& \frac{\beta_t^{d}}{\sup\limits_{x\in\Z^d}\va{g(x)}^2}\sum\limits_{x\in\Z^d} \P(X_{\ent{\beta_t^\alpha}} =x)\xi_t(x) \Pi_t^{\gamma_t-1}\xi_t(x)
\end{align*}
According to the  local limit theorem (Remark page 661 of Le Gall and Rosen \cite{LGR91}), 
$$\lim_{t\rightarrow +\infty} \sup\limits_{x\in\Z^d}\va{\beta_t^d \P(X_{\ent{\beta_t^\alpha}} =x)-p_1\pare{\frac{x}{\beta_t^d}}}=0,
$$
where $p_1$ is the transition density of the limit process. 
Since $\xi_t$ is supported by  $[-M\beta_t,M\beta_t]^d$, we only sum over this box. 
Moreover, there exists $\delta>0$ such that $p_1(x)>\delta$ for all $x\in[-M,M]^d$. 
$\xi_t$ being non negative, we get for any $\delta >0$ and any $t$ sufficiently large:
\begin{align*}
\E \cro{\exp\pare{\beta_t^{-\alpha} \int_{\ent{\beta_t^{\alpha}}}^{\ent{\beta_t^{\alpha}} \gamma_t} f\pare{\frac{X_s}{\beta_t}}ds}}
\geq \frac{\delta}{2 \sup\limits_{x\in\Z^d}\va{g(x)}^2} \sum\limits_{x\in\Z^d} \xi_t(x) \Pi_t^{\gamma_t-1}\xi_t(x)
=C\bra{\xi_t,\Pi_t^{\gamma_t-1}\xi_t}.
\end{align*}
Using the spectral representation of the operator $\Pi_t$, there exists a probability measure $\mu_{\xi_t}$ such that
$\bra{\xi_t,\Pi_t\xi_t}=\int_0^{+\infty}\lambda d\mu_{\xi_t}.$ It follows then from Jensen's inequality that
$$\bra{\xi_t,\Pi_t^{\gamma_t-1}\xi_t}
=\int_0^{+\infty}\lambda^{\gamma_t-1} d\mu_{\xi_t}
\geq \pare{\int_0^{+\infty}\lambda d\mu_{\xi_t}}^{\gamma_t-1}
=\bra{\xi_t,\Pi_t\xi_t}^{\gamma_t-1}.
$$
We have thus proved:
$$\liminf_{t\rightarrow+\infty} \frac{\beta_t^\alpha}{t}\log
 \E\cro{\exp\pare{\beta_t^{-\alpha}\int_0^t f\pare{\frac{X_s}{\beta_t}}ds}}
\geq \liminf_{t\rightarrow+\infty} \log \bra{\xi_t,\Pi_t\xi_t}.$$
Now, remember that $(X_t,t\geq 0)$ is in the domain of a stable process $(U_t,t\geq 0)$, i.e: $\frac{1}{t^{1/\alpha}}(X_{ts}, s \in 
\cro{0;1})\rightarrow (U_s, s \in [0;1])$ in the Skorokhod's J1 topology.
\begin{align*}
\bra{\xi_t,\Pi_t\xi_t}
& = \sum_{x\in\Z^d}\xi_t(x) \E_x\cro{\exp\pare{\beta_t^{-\alpha} \int_0^{\ent{\beta_t^\alpha}} f\pare{\frac{X_s}{\beta_t}}ds} \xi_t(X_{\ent{\beta_t^\alpha}})}\\
& =\beta_t^{-d} \sum_{x\in\Z^d} g\pare{\frac{x}{\beta_t}} \E_0 \cro{\exp\pare{ \int_0^1 f\pare{\frac{X_{s\ent{\beta_t^\alpha}}+x}{\beta_t}}ds} g\pare{\frac{X_{\ent{\beta_t^\alpha}}+x}{\beta_t}}}\\
& \underset{t\rightarrow +\infty}{\longrightarrow} \int_{\R^d}g(x) \E_0\cro{\exp\pare{\int_0^1f(U_s+x)ds}g(U_1+x)}dx\\
& =\int_{\R^d}g(x) \E_x\cro{\exp\pare{\int_0^1f(U_s)ds}g(U_1)}dx.
\end{align*}
Let us justify the convergence over $t$ above. Let $F$ be the functional, defined for any function $j$ by:
$$F(j)=\exp\pare{\int_0^1f\circ j(s)ds}g\circ j(1).$$
We have to prove that
$$\int_{\R^d}g(x) \E\cro{F\pare{(U_s+x,s\in[0,1])}}dx
-\beta_t^{-d} \sum_{x\in\Z^d} g\pare{\frac{x}{\beta_t}} \E\cro{F\pare{\frac{1}{\beta_t}\pare{X_{s\ent{\beta_t^\alpha}}+x,s\in[0,1]}}}
\underset{t\rightarrow +\infty}{\longrightarrow} 0.$$
Introducing 
$\int_{\R^d}g(x) \E\cro{F\pare{\frac{1}{\beta_t}X_{s\ent{\beta_t^\alpha}}+x,s\in[0,1]}}dx$
in the previous sum, we can apply the dominate convergence to obtain:
$$\int_{\R^d}g(x) \E\cro{F\pare{(U_s+x,s\in[0,1])}}dx
-\int_{\R^d}g(x) \E\cro{F\pare{\frac{1}{\beta_t}X_{s\ent{\beta_t^\alpha}}+x,s\in[0,1]}}dx
\underset{t\rightarrow +\infty}{\longrightarrow} 0.$$
Indeed, on one hand $F$ is continuous in the Skorokhod's J1 topology because $f$ and $g$ are continuous, and
on the other hand $F$ is bounded since $f$ and $g$ are bounded. Moreover we can see that
\begin{align*}
\int_{\R^d}g(x) \E\cro{F\pare{\frac{X_{s\ent{\beta_t^\alpha}}}{\beta_t}+x,s\in[0,1]}}dx
-&\beta_t^{-d} \sum_{y\in\Z^d} g\pare{\frac{y}{\beta_t}} 
\E\cro{F\pare{\frac{X_{s\ent{\beta_t^\alpha}}+y}{\beta_t},s\in[0,1]}}
\underset{t\rightarrow +\infty}{\longrightarrow} 0.
\end{align*}
Indeed, $f$ and $g$ being continuous with compact support, the function $x \in \R^d \mapsto 
F\pare{\frac{X_{s\ent{\beta_t^\alpha}}}{\beta_t}+x,s\in[0,1]}$ is uniformly continuous, and its modulus of 
continuity does not depend of $t$.  

At this point, we have proved that
\begin{align*}
\liminf_{t\rightarrow+\infty} \frac{\beta_t^\alpha}{t}\log \E \cro{\exp\pare{\beta_t^{-\alpha}\int_0^t f\pare{\frac{X_s}{\beta_t}}ds}}
&\geq  \log\int_{\R^d}g(x)\E_x\cro{\exp\pare{\int_0^1f(U_s)ds}g(U_1)}dx.\\
&=\log\bra{g,S_1g}
\end{align*}
where $S_t$ is the semigroup of operators defined on $L^2(\R^d)$ by:
$$S_th(x)=E_x\cro{\exp\pare{\int_0^tf(U_s)ds}h(U_t)}.$$
This semigroup is self-adjoint due to the symmetry of the process $(U_t,t\geq 0)$.
Thus, its infinitesimal operator $\mathcal{A}$ is also self-adjoint. Therefore, using the spectral representation of $\mathcal{A}$
and Jensen's inequality, we obtain:
\begin{align*}
\bra{g,S_1g}
& =\bra{g,\exp(\mathcal{A})g}
=\int_{\R}\exp(\lambda)d\mu_g(\lambda)
\\
& \geq \exp\int_{\R}\lambda d\mu_g(\lambda)
=\exp\bra{g,\mathcal{A}g} 
=\exp\pare{\int_{\R^d}f(x)g^2(x)dx-\int_{\R^d}\va{\omega}^\alpha\va{\F g(\omega)}^2d\omega}.
\end{align*}
Hence,
$$\liminf_{t\rightarrow+\infty} \frac{\beta_t^\alpha}{t}\log \E\cro{\exp\pare{\beta_t^{-\alpha}\int_0^t f\pare{\frac{X_s}{\beta_t}}ds}}
\geq \int_{\R^d}f(x)g(x)^2dx- \int_{\R^d}\va{\omega}^\alpha\va{\F(g)(\omega)}^2d\omega.$$
for any non negative function $g\in\mathcal{C}_c^{\infty}(\R^d)$. We now take the supremum over non negative functions 
$g \in\mathcal{C}_c^{\infty}(\R^d)$.  Using  representation (\ref{def.grad}) and boundedness of $f$, one can easily see
that this is the same as taking the supremum over non negative functions in $L^2(\R^d)$, and this ends the proof of 
 Theorem \ref{TI}.
\qed


\begin{thebibliography}{00}
\bibitem{A}
Asselah, A.
{\em  Large deviation principle for self-intersection local times for random walk in $\Bbb Z^d$ with $d\geq 5$.}
  ALEA Lat. Am. J. Probab. Math. Stat.  6  (2009), 281-322.
 
 \bibitem{AC} Asselah, A.; Castell, F.
{\em  Random walk in random scenery and self-intersection local times in dimensions $d\ge5$.} 
 Probab. Theory Related Fields  138  (2007),  no. 1-2, 1-32. 
 
\bibitem{BHK}  Brydges, D.; van der Hofstad, R.; K\"onig, W.
{\em  Joint density for the local times of continuous-time Markov chains.}
  Ann. Probab.  35  (2007),  no. 4, 1307-1332.
  
 \bibitem{BCR} Bass, R.; Chen, Xia; Rosen, J. 
{\em Moderate deviations and laws of the iterated logarithm for the
              renormalized self-intersection local times of planar random
              walks.}
Electron. J. Probab. 11 (2006), no. 37, 993--1030 (electronic). 

 \bibitem{BCR2} Bass, R.; Chen, Xia; Rosen, J. 
{\em Large deviations for renormalized self-intersection local
              times of stable processes.}
Ann. Probab. 33, (2005), no. 3, 984--1013.

 \bibitem{BL02} Bass, R.; Levin, D. A. {\em Transition probabilities for symmetric jump processes.}
 Trans. Amer. Math. Soc.  354 (2002), no. 7, 2933--2953 (electronic).
 
\bibitem{BK} Becker, M.; K\"onig, W.
{\em Self-intersection local times of random walks: Exponential moments in subcritical dimensions} To appear in 
Probab. Theory Relat. Fields 

%
\bibitem{CM} Carmona, Ren\'e A.; Molchanov, S. A. 
{\em Parabolic Anderson problem and intermittency.}  Mem. Amer. Math. Soc.  108  (1994),  no. 518, viii+125 pp

\bibitem{C} Castell, F. 
{\em Large deviations for intersection local times in critical dimension. }
 Ann. Probab.  38  (2010),  no. 2, 927-953.
 

\bibitem{CGP} Castell, F.; Guillotin-Plantard, N.; P\`ene, F.
{\em Limit theorems for one and two-dimensional random walk in random scenery.}
 Preprint arXiv:1103|.4453 [math.PR]. To appear in Ann. Inst. Henri Poincar\'e-PR. 
 
 \bibitem{cerny} \H{C}ern\'y, Ji\H{r}\'i.
{\em  Moments and distribution of the local time of a two-dimensional random walk.}
  Stochastic Process. Appl.  117  (2007),  no. 2, 262-270.

\bibitem{chen} 
Chen, Xia. 
{\em Random walk intersections. Large deviations and related topics.}  Volume 157 of Mathematical Surveys and Monographs. American Mathematical Society, Providence, RI, 2010. 

\bibitem{C04} 
Chen, Xia. 
{\em Exponential asymptotics and law of the iterated logarithm for
              intersection local times of random walks.} Ann. Probab. 32 (2004), no. 4, 3248--3300. 

\bibitem{CL} Chen, Xia; Li, Wenbo V.
{\em Large and moderate deviations for intersection local times. }
 Probab. Theory Related Fields  128  (2004),  no. 2, 213-254.
 
\bibitem{CLR}
Chen, Xia; Li, Wenbo V.; Rosen, Jay. 
{\em Large deviations for local times of stable processes and stable random walks in 1 dimension.}
  Electron. J. Probab.  {\bf 10}  (2005), no. 16, 577-608.
 
\bibitem{CR05}
Chen, Xia; Rosen, Jay.
{\em Exponential asymptotics for intersection local times of stable
              processes and random walks.}
Ann. Inst. H. Poincar\'e Probab. Statist. {\bf 41} (2005), no. 5, 901-928.

 

\bibitem{DJ} C. Domb and G.S. Joyce. 
{\em Cluster expansion for a polymer chain.}  J. Phys, {\bf 5}, 956-976, 1972.

\bibitem{DV} Donsker, M. D.; Varadhan, S. R. S. 
{\em The polaron problem and large deviations.}
 New stochastic methods in physics.  Phys. Rep.  {\bf 77}  (1981),  no. 3, 235-237.
 
\bibitem{Ed} 
S. F. Edwards. 
{\em The statistical mechanics of polymers with excluded volume.}
 Proc. Phys. Soc. {\bf 85}, 613-624, 1965.

\bibitem{PdG} de Gennes, P. G.
{\em   Scaling concepts in Polymer Physics.} Cornell University Press, Ithaca (1979).

\bibitem{feller} Feller W. 
{\em An introduction to probability theory and its applications. Vol II}. Second 
Wiley series in probability and mathematical statistics, Second edition, John Wiley \& Sons, New York (1971), xxiv+669 pages.


\bibitem{Gra}  Grafakos, L. {\em Classical Fourier analysis},
Graduate Texts in Mathematics 249 , Second edition,  Springer, New York (2008), xvi+489 pages. 

\bibitem{KS} Kesten, H.; Spitzer, F. 
{\em A limit theorem related to a new class of self-similar processes. }
 Z. Wahrsch. Verw. Gebiete  50  (1979), no. 1, 5-25.
 
\bibitem{KMSS} Khanin, K. M.; 
Mazel, A. E.; Shlosman, S. B.; Sina\"{\i}, Ya. G.
{\em Loop condensation effects in the behavior of random walks. }
 The Dynkin Festschrift,  167-184, Progr. Probab., {\bf 34}, Birkh\"auser Boston, Boston, MA, 1994. 


\bibitem{konig}
K\"onig, W.
{\em Upper tails of self-intersection local times of random walks. Survey of proof techniques. } Actes rencontr. CIRM {\bf 2:1},
15-24 (2010).

\bibitem{ClemSPA} Laurent, C. 
{\em Large deviations for self-intersection local times of stable random walks.}
  Stochastic Process. Appl. 120 (2010), no. 11, 2190-2211.
  
  \bibitem{ClemEJP}  Laurent, C. 
 {\em Large deviations for self-intersection local times in subcritical dimensions.}
 Electron. J. Probab. 17 (2012), no. 21, 1--20. 
 
\bibitem{LGR91} Le Gall, J-F., Rosen, Jay. 
{\em The range of stable random walks}, Ann. Probab. 19 (1991), no 2, 650--705. 


 \bibitem {LT91} Ledoux, M.;  Talagrand, M.
 {\em Probability in {B}anach spaces.} Ergebnisse der Mathematik und ihrer Grenzgebiete (3) [Results
              in Mathematics and Related Areas (3)], vol 23, Springer-Verlag, Berlin (1991), xii+480 pages.
              
 \bibitem{MR06} Marcus, M. B.; Rosen, Jay.
{\em Markov processes, {G}aussian processes, and local times.} Cambridge Studies in Advanced Mathematics 100,
Cambridge University Press, Cambridge (2006), x+620 pages.

\bibitem{V} Varadhan, S.R.S.
{\em Appendix to Euclidean quantum field theory, by K. Symanzik.} 
Local Quantum Theory (R. Jost, ed). Academic New York (1969).

%
\bibitem{ZMRS} Zel'dovich, Ya. B.; Molchanov, S. A.; Ruzmaikin, A. A.; Sokoloff, D. D. 
{\em Intermittency, diffusion and generation in a nonstationary random medium.}
  Mathematical physics reviews, {\bf Vol. 7},  3-110, 
  Soviet Sci. Rev. Sect. C Math. Phys. Rev., 7, Harwood Academic Publ., Chur, 198

\end{thebibliography}
\end{document}